\numberwithin{equation}{section} 
\newtheorem{thm}{Theorem}[section]
\newtheorem{lem}[thm]{Lemma}
\newtheorem{prop}[thm]{Proposition}
\newtheorem{rem}[thm]{Remark}
\newcommand{\R}{\mathbb{R}}
\newcommand{\C}{\mathbb{C}}
\newcommand{\Q}{\mathbb{Q}}
\newcommand{\N}{\mathbb{N}}
\newcommand{\Z}{\mathbb{Z}}
\begin{document}
\baselineskip 14pt

\title{Gaussian rational numbers in Cantor sets in the complex plane}

\author{Yu-Feng Wu}
\address[]{School of Mathematics and Statistics\\HNP-LAMA\\Central South University\\ Changsha, 410083, PR China}
\email{yufengwu.wu@gmail.com}

\keywords{Cantor sets, Gaussian rational numbers, self-similar sets, Hausdorff dimension}
\thanks{2010 {\it Mathematics Subject Classification}: 11A63, 11J83, 28A80}


\date{}

\begin{abstract}
Given $\beta\in\Z[i]$ with $|\beta|>1$ and a finite set $D\subset \Q(i)$, let 
\[K_{\beta, D}=\left\{\sum_{j=1}^{\infty}\frac{d_j}{\beta^j}: d_j\in D, \forall j\geq 1\right\}.\] 
Let $\mathcal{S}$ be a finite set of non-associate prime elements in $\Z[i]$ not dividing $\beta$. We prove that if the Hausdorff dimension of $K_{\beta,D}$ is less than $1$, then there are only finitely many Gaussian rational numbers in $K_{\beta,D}$ whose  denominators have  all their prime factors in $\mathcal{S}$.
\end{abstract}

\maketitle

\section{Introduction}\label{S-1}

In this paper, we study the distribution of Gaussian rational numbers in a class of self-similar sets in the complex plane. We begin by reviewing related studies on the real line. The middle-third Cantor $\bm{C}$ consists of all real numbers in $[0,1]$ whose ternary expansions do not have the digit $1$. That is,
\begin{equation}\label{eqMiddleCantor}
\bm{C}=\left\{\sum_{j=1}^{\infty}\frac{a_j}{3^j}: a_j\in \{0,2\}, j\geq 1\right\}.
\end{equation}
In 1984,  Mahler \cite{Mahler84} proposed to study the problem of how well  elements of $\bm{C}$ can  be
approximated by rational numbers in $\bm{C}$ and by rational numbers outside  $\bm{C}$. Mahler's problem pioneered the study of Diophantine approximation on fractals, leading to a substantial amount of research; see, for example, \cite{LSV07O,BD16M,Shparlinski21,TWW24M,Baker25,CVY24C,CU25S,BHZ24K}.

Closely related to Mahler's problem is the question of what  the rational numbers in the middle-third Cantor set are. Wall \cite{Wall83} first noted that $0,\frac{1}{4}, \frac{3}{4}$ and $1$ are the only dyadic rationals in $\bm{C}$. He also showed in \cite{Wall90} that there are  exactly $14$  rationals in $\bm{C}$ with decimal denominators. Later, Nagy \cite{Nagy01} proved that for any prime number $p>3$, there are only finitely many rational numbers in $\bm{C}$ whose denominators are powers of $p$.  Bloshchitsyn \cite{Bloshchitsyn15} obtained analogous results for the class of  $m$-adic Cantor sets, a generalization of $\bm{C}$ obtained by replacing the base $3$ with an integer $m\geq 3$ and the digit set $\{0,2\}$ with a proper subset of $\{0,1,\ldots,m-1\}$ in \eqref{eqMiddleCantor}. Recently, Schleischitz \cite{Schleischitz21} proved a more general result in this setting: if $P$ is a finite set of distinct primes not dividing $m$, then there are only finitely many rational numbers in such a set whose denominators have all prime factors from $P$. Shparlinski \cite{Shparlinski21} later obtained a quantitative version of this result using a different method. Li et al. \cite{LLW23} extended Shparlinski's result to the broader setting of $\times m$-invariant sets on $[0,1]$. Jiang  et al. \cite{JKLW24R} proved a finiteness result for translations of $m$-adic Cantor sets. Very  recently, Kong et al. \cite{KLW25} obtained a further extension of Schleischitz's result by allowing the digit set to be a finite subset of $\Q$. 

Solomon, Trauthwein and Weiss \cite{RSTW20} studied the problem of counting rational numbers in $\bm{C}$ up to a given height. They  formulated a conjecture asserting that for any $\epsilon>0$, there are $\ll N^{\log_32+\epsilon}$ reduced rational numbers in $\bm{C}$ with denominators at most $N$. This conjecture (and its natural generalizations) remains open; see \cite{CVY24C,CU25S} for significant  progress. Here and throughout, the notation $a(x)\ll b(x)$ means that $a(x)\leq cb(x)$ for some positive constant $c$ independent of $x$. 

The purpose of this paper is to generalize some of the above results to Gaussian rational numbers in a class of  self-similar sets in the complex plane. To state our setting and result, we first introduce some background on self-similar sets.

A mapping $\phi:\R^d\to \R^d$ is called {\em contracting} if there exists $c\in (0,1)$ such that 
\[|\phi(x)-\phi(y)|\leq c|x-y|, \quad \forall x, y\in\R^d.\]
If equality holds, we call $\phi$ a {\em similitude} and $c$ the {\em similarity ratio} of $\phi$. 
We call a finite collection $\Phi=\{\phi_i\}_{i=1}^{\ell}$ of contracting mappings from $\R^d$ to itself an {\em iterated function system (IFS)}. According to a classical result of Hutchinson \cite{hutchinson}, given an IFS $\Phi=\{\phi_i\}_{i=1}^{\ell}$ on $\R^d$, there is a unique non-empty compact set $E\subset\R^d$, called the {\em attractor} of $\Phi$, such that $$E=\bigcup_{i=1}^{\ell}\phi_i(E).$$ 
We say that $E$ is generated by $\Phi$, and call $\Phi$ a {\em generating IFS} for $E$.  
The attractor $E$ is called a {\em self-similar set} if all $\phi_i$ are similitudes on $\R^d$. In this case, if all $\phi_i$ have the same linear part, we call $\Phi$ an {\em homogeneous} IFS. For instance, the middle-third Cantor set defined in \eqref{eqMiddleCantor} is a self-similar set generated by the IFS $\{\frac{x}{3},\frac{x+2}{3}\}$. Self-similar sets in $\C$ are naturally understood to be the self-similar sets on $\R^2$. 

For a set $A\subset \R^d$, we denote by $\dim_{\rm H}A$ the Hausdorff dimension of $A$; see \cite{Falconer14} for the definition. Given an IFS  $\Phi=\{\phi_i\}_{i=1}^{\ell}$ on $\R^d$ which generates a self-similar set $E$,  we define the {\em similarity dimension} of $\Phi$ to be the positive number $s$ satisfying
\[\sum_{i=1}^{\ell}\rho_{i}^s=1,\]
where $\rho_i\in (0,1)$ is the similarity ratio of $\phi_i$, $i=1,\ldots,\ell$. 
It is well-known that the similarity dimension of $\Phi$ is always an upper bound for the Hausdorff dimension of $E$. Equality holds under certain separation conditions, for instance,  the open set condition; see e.g. \cite{Falconer14}.

We also need some background on the Gaussian integers and Gaussian rational numbers, together with some more notation.  Throughout, let $\N$ be the set of positive integers and $\N_0:=\N\cup\{0\}$.
Let $\mathbb{Z}[\mathrm{i}] = \{a+b\mathrm{i} : a, b\in \mathbb{Z}\}$ be the ring of Gaussian integers.  The units of $\mathbb{Z}[\mathrm{i}]$ are $\pm 1$ and $\pm \mathrm{i}$. Let $\Q(i)=\{\omega/\gamma: \omega,\gamma\in\Z[i], \gamma\neq 0\}$ be the set of Gaussian rational numbers. For $\alpha, \beta \in \mathbb{Z}[\mathrm{i}]$, we say that $\alpha$ is a {\em divisor} (or a {\em factor}) of $\beta$, or say $\alpha$ {\em divides} $\beta$ and write as $\alpha \mid \beta$, if  $\beta=\alpha\gamma$ for some $\gamma\in\Z[i]$. We say that $\alpha$ is an {\em associate} of $\beta$ if $\alpha=u\beta$, where $u$ is a unit of $\Z[i]$. We call  $\beta \in \mathbb{Z}[\mathrm{i}] \setminus \{0, \pm 1, \pm \mathrm{i}\}$ a {\em  prime element} if  its only divisors are the units and its own associates.  Prime elements $\beta_1,\ldots,\beta_k\in\Z[i]$ are said to be {\em non-associate} if for any $i\neq j$, $\beta_i$ is not an associate of $\beta_j$. For $\alpha\in \C$, $|\alpha|$ is the modulus of $\alpha$ and $\bar{\alpha}$ is the conjugate of $\alpha$.

In this paper, we are interested in  self-similar sets in $\C$ generated by homogeneous IFSs with the common linear part being the reciprocal of a Gaussian integer and the translational parts being Gaussian rational numbers. More precisely, let $\beta\in \Z[i]$ with $|\beta|>1$ and $D=\{t_1,\ldots,t_{\ell}\}\subset \Q(i)$ be a finite set. Let $K_{\beta,D}$ be the self-similar set in $\C$ generated by the IFS
\begin{equation}\label{eqIFS}
\Phi_{\beta,D}:=\left\{\phi_j(z)=\frac{z+t_j}{\beta}: 1\leq j\leq \ell\right\}.
\end{equation}
It is easy to see that $K_{\beta,D}$ can also be expressed as 
\begin{equation}\label{eqKbetaDdef}
K_{\beta, D}=\left\{\sum_{j=1}^{\infty}\frac{d_j}{\beta^j}: d_j\in D, \forall j\geq 1\right\}.
\end{equation}
The self-similar set $K_{\beta,D}$ (especially some of its subclasses) has been studied in several mathematical contexts. For instance, when $\beta=-n\pm i$ and $D=\{0,1,\ldots, n^2-1\}$ with $n\in\N$, $K_{\beta,D}$ appears in the study of number systems with complex bases \cite{KS75C} and self-similar tilings of the plane \cite{Gilbert86T}; very recently, Diophantine approximation on $K_{\beta,D}$  has been studied  in \cite{RHS23C} when $D$ is a proper subset of $\{0,1,\ldots,n^2-1\}$. For $\beta\in\C$ with $|\beta|>1$, there have been works on the topology of $K_{\beta,\{-1,1\}}$ \cite{SX03O}, as well as self-similar measures on $K_{\beta,\{-1,1\}}$ and its generalizations \cite{SX03O,SS16A,MO23F}. Very recently, the pointwise normality of self-similar measures on $K_{\beta,D}$ has been investigated in  \cite{LWXZ25E} when $D\subset\Z[i]$. In this paper, we contribute to the study of $K_{\beta,D}$ by proving a result on Gaussian rational numbers in $K_{\beta,D}$, in a spirit  similar to the results on rational numbers in Cantor sets on $\R$ surveyed above.

The main result of this paper is the following.
\begin{thm}\label{thmfinite}
Let $\beta\in \Z[i]$ with $|\beta|>1$ and $D\subset \Q(i)$ be a finite set. Let $K_{\beta,D}$ be the self-similar set defined as in \eqref{eqKbetaDdef}.
Let  $\mathcal{S}$ be a finite
set of non-associate prime elements in $\Z[i]$ not dividing $\beta$. Suppose  that $\dim_{\rm H}K_{\beta,D}<1$. 
Then there are only finitely many Gaussian rational numbers
$\frac{\omega}{\gamma}\in K_{\beta,D}$ with $\gamma\in \Z[i]$ whose prime factors all belong to $\mathcal{S}$.
\end{thm}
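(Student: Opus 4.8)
The plan is to show that any Gaussian rational in $K_{\beta,D}$ with an $\mathcal S$-smooth denominator is forced to be a \emph{purely periodic point} of the $\beta$-expansion, whose period is so long relative to the size of the denominator that its multiplicative orbit modulo the denominator cannot avoid the ``gap'' in $K_{\beta,D}$ that the hypothesis $\dim_{\rm H}K_{\beta,D}<1$ provides.

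First I would reduce to the case $D\subset\Z[i]$: choosing a common denominator $\delta\in\Z[i]$ for the finite set $D$, the similarity $z\mapsto\delta z$ maps $K_{\beta,D}$ onto $K_{\beta,\delta D}$ (with $\delta D\subset\Z[i]$), preserves Hausdorff dimension, and sends a Gaussian rational with $\mathcal S$-smooth denominator to one whose reduced denominator divides the old one and is therefore still $\mathcal S$-smooth; so finiteness for $K_{\beta,\delta D}$ yields finiteness for $K_{\beta,D}$. With $D\subset\Z[i]$, fix $\tfrac{\omega}{\gamma}\in K_{\beta,D}$ in lowest terms with all prime factors of $\gamma$ in $\mathcal S$. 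Since the primes of $\mathcal S$ do not divide $\beta$, we have $\gcd(\gamma,\beta)=1$. The inverse branches of \eqref{eqIFS} realize the shift $T(z)=\beta z-d$ (with $d\in D$ the first digit) as a self-map of $K_{\beta,D}$, and iterating gives $T^n\!\left(\tfrac{\omega}{\gamma}\right)=\tfrac{\beta^n\omega-c_n\gamma}{\gamma}$ with $c_n\in\Z[i]$. Because $\gcd(\beta^n\omega,\gamma)=1$, every iterate is again in lowest terms with the \emph{same} denominator $\gamma$, so the orbit takes finitely many values in the compact set $K_{\beta,D}$ and the expansion is eventually periodic; the coprimality $\gcd(\gamma,\beta)=1$ rules out any preperiod, exactly as a base-ten fraction with denominator prime to $10$ is purely periodic. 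Hence $\tfrac{\omega}{\gamma}$ is purely periodic, its minimal period equals $\mathrm{ord}_{\gamma}(\beta)=:p$, and in particular $\gamma\mid\beta^{p}-1$.

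Next I would bound the period from below. Writing $\mathcal S=\{\pi_1,\dots,\pi_r\}$ and using a lifting-the-exponent estimate in $\Z[i]$ (treating the finitely many primes above $2$ separately), one gets $v_{\pi_i}(\beta^{p}-1)\le B_i+O(\log p)$, so the $\mathcal S$-smooth number $\gamma$ satisfies $|\gamma|^{2}\le C\,p^{c}$ for constants $c,C$ depending only on $\beta$ and $\mathcal S$. Equivalently $p\ge c'|\gamma|^{2/c}$: an $\mathcal S$-smooth denominator can divide $\beta^{p}-1$ only when the period is at least a fixed power of $|\gamma|$. Thus, if denominators were unbounded, we would obtain points whose multiplicative orbit $\{\beta^{j}\omega \bmod \gamma:0\le j<p\}\subset(\Z[i]/\gamma)^{\ast}$ has size $p$ growing like a power of $|\gamma|$.

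Finally comes the use of $\dim_{\rm H}K_{\beta,D}<1$, which I expect to be the main obstacle. Reducing modulo $\Z[i]$, the identity $\beta^{j}\omega/\gamma\equiv T^{j}(\omega/\gamma)\pmod{\Z[i]}$ shows that the torus orbit $\{(\beta^{j}\omega \bmod\gamma)/\gamma\}_{0\le j<p}$ lies in the projection of $K_{\beta,D}$ to $\C/\Z[i]$. Since $\dim_{\rm H}K_{\beta,D}<1<2$, this projection has zero area, so its complement contains a fixed ball $B$, and the orbit must avoid $B$ entirely. On the other hand, the orbit is the coset of a cyclic multiplicative subgroup of $(\Z[i]/\gamma)^{\ast}$ of size $p\ge c'|\gamma|^{2/c}$, and equidistribution of such orbits in $\C/\Z[i]$ — via nontrivial bounds for the associated Gauss/character sums over $\Z[i]/\gamma$, in the spirit of Bourgain–Glibichuk–Konyagin sum–product estimates — forbids it from missing a ball of positive measure once $|\gamma|$ is large. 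This contradiction bounds $|\gamma|$, and since only finitely many $\gamma$ of bounded norm and finitely many numerators per $\gamma$ are possible, the theorem follows. The delicate points are quantifying the gap $B$ from the dimension hypothesis and, above all, establishing equidistribution strong enough to operate in the regime $p\approx|\gamma|^{2/c}$ (a possibly small power), where the subgroup could a priori be concentrated; handling composite, few-prime moduli $\gamma$ is precisely where the sum–product input is essential.
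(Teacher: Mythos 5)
Your first two steps run roughly parallel to the paper's: the eventual periodicity of a coding with denominator-preserving inverse branches, and the fact that the period $n$ forces $\gamma\mid\beta^{n}-1$ (hence $n\geq{\rm ord}(\beta;\gamma)$), together with a lifting-the-exponent lower bound on that order, are exactly the content of the paper's Theorem \ref{lemperiod} and Lemmas \ref{lemcompord}--\ref{lemorderlowerEs} (your bound $n\gg|\gamma|^{2/c}$ is weaker than the paper's essentially linear bound $n\gg H(\gamma)$, and your claim of \emph{pure} periodicity with period exactly ${\rm ord}_{\gamma}(\beta)$ is shaky because codings are not unique, but neither of these is the real problem). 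The real problem is your final step. The paper never proves, and never needs, any equidistribution: it uses $\dim_{\rm H}K_{\beta,D}<1$ through Zerner's weak separation theorem (Lemmas \ref{LemWSP}--\ref{LemComposition}) to replace $\Phi_{\beta,D}$ by an iterate of similarity dimension $s<1$, and then a counting theorem (Theorems \ref{thmCounting}, \ref{thmAS}) to get an \emph{upper} bound $n\leq C_1H(\gamma)^{s}$ on the period; playing this against the order lower bound gives $H(\gamma)\ll H(\gamma)^{s}$ and finiteness. You have no upper bound on the period at all, and instead try to close the argument with equidistribution of the multiplicative orbit in $\C/\Z[i]$.

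That final step is a genuine gap, and in fact it cannot work as stated. First, the input you invoke (Bourgain--Glibichuk--Konyagin-type bounds for cosets of the cyclic group $\langle\beta\rangle$ in $(\Z[i]/\gamma)^{\times}$, for composite $\gamma$ supported on finitely many primes with unbounded exponents, in the regime $|\langle\beta\rangle|\approx N(\gamma)^{\varepsilon}$) is not an off-the-shelf result, and you do not prove it. Second, and more fatally, the statement you need is \emph{false} for admissible choices of $(\beta,\mathcal{S})$. The theorem allows $\beta\in\Z$ (say $\beta=3$, $D=\{0,1\}$, $\mathcal{S}=\{7\}$): then every orbit point $\beta^{j}\omega/\gamma$ is a \emph{real} rational multiple of $\omega$, so the whole orbit lies on the fixed closed one-dimensional subtorus $(\R\omega+\Z[i])/\Z[i]$ of $\C/\Z[i]$; it always misses a ball, so ``the orbit avoids $B$'' yields no contradiction, for any size of $\gamma$. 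The same collapse occurs for any $\beta$ with a real power, e.g.\ $\beta=1+i$ (where $\beta^{4}=-4$) together with an inert prime $\mathcal{S}=\{3\}$: the orbit is confined to at most four closed geodesics. More generally, for $\gamma$ a power of an inert prime $p$, the group $(\Z[i]/p^{k})^{\times}$ has order $\asymp p^{2k}$ while ${\rm ord}(\beta;p^{k})\ll p^{k}$, so $\langle\beta\rangle$ is a square-root-size subgroup that can be (cosets of) the ``real'' subgroup $(\Z/p^{k})^{\times}$ -- precisely the structured, line-concentrated case that subfield/subring obstructions exclude from BGK-type theorems. Since the theorem is nonetheless true in these cases (the paper handles $\beta\in\Z$ by projecting to the coordinate axes and citing the real result of \cite{KLW25}, and handles the rest by the counting argument), it is your method, not the statement, that fails: the hypothesis $\dim_{\rm H}K_{\beta,D}<1$ must be used to bound the period from above, not merely to produce a ball that the orbit misses.
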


\begin{rem}
In Theorem \ref{thmfinite}, the assumption that $\dim_{\rm H}K_{\beta, D}<1$ cannot in general be  relaxed. For instance, the unit interval $[0,1]$ (viewed as a subset of $\C$) is the self-similar set $K_{2,\{0,1\}}$, which has Hausdorff dimension $1$ and clearly contains infinitely many (Gaussian) rationals whose denominators are powers of $3$. 
\end{rem}

For the proof of Theorem \ref{thmfinite}, besides adopting and extending some
ideas and strategies from the previous works \cite{Bloshchitsyn15,Schleischitz21,KLW25} on Cantor sets on $\R$, we also need to develop new techniques to overcome the difficulties caused by the IFS as well as by the structure of $\Z[i]$. Below we briefly outline the strategy employed in our proof of Theorem \ref{thmfinite}. 

To prove Theorem \ref{thmfinite}, we first establish a counting result on the number of rational vectors (with the same denominator in components) up to a given height in self-similar sets on $\R^d$ (see Theorem \ref{thmCounting}), which slightly improves a result of Schleischitz \cite{Schleischitz21}. We achieve this by first proving a rough counting result (see Proposition \ref{thmdimB}) for general bounded subsets of $\R^d$ in terms of the upper box-counting dimension, using an idea from  \cite{Schleischitz21}. We  then show that this can be refined to obtain Theorem~\ref{thmCounting} with the aid of a simple property of self-similar sets. For our purpose, we then formulate a version of the counting result for Gaussian rational numbers in self-similar sets in $\C$ (see Theorem \ref{thmAS}). In this step, we introduce the denominator height $H(\cdot)$ for Gaussian integers; see \eqref{defReducedNorm}. It turns out that $H$ is not only suitable for measuring the height of Gaussian rational numbers (as in Theorem \ref{thmAS}), but also plays a crucial role in the subsequent development. Indeed, our next step is to use the counting result (Theorem \ref{thmAS}) to show that every $\frac{\omega}{\gamma}\in  K_{\beta,D}\cap \Q(i)$ admits an eventually periodic coding with period $n\ll H(\gamma)^s$, where $s$ can be made less than $1$ due to the assumption that $\dim_{\rm H}K_{\beta,D}<1$; see Theorem \ref{lemperiod}. On the other hand, by establishing via a series of lemmas the properties of $H$, we are able to prove a result on the multiplicative order for Gaussian integers (see Lemma \ref{lemorderlowerEs}). Then, in the final step, we use this and a trick from \cite{KLW25} to derive that $n\gg H(\gamma)$ when $\gamma$ is  composed of prime elements from $\mathcal{S}$, yielding that $H(\gamma)\ll H(\gamma)^s$.  Since $s<1$, the finiteness conclusion of Theorem \ref{thmfinite} follows.

The paper is organized as follows. Throughout Sections \ref{S2}-\ref{S6}, we let $\Phi_{\beta,D}$  be   the IFS and $K_{\beta,D}$ be the self-similar set defined in \eqref{eqIFS} and \eqref{eqKbetaDdef}, respectively.  In Section \ref{S2}, we prove a counting result on rational vectors in self-similar sets on $\R^d$ and give a version for Gaussian rational numbers in self-similar sets in $\C$. In Section \ref{S3}, we use the counting result Theorem \ref{thmAS} obtained in Section \ref{S2} to derive an upper bound estimate for the period length of an eventually periodic coding (which exists) for every Gaussian rational number in $K_{\beta,D}$. In Section~\ref{S4}, we establish several properties of $\Phi_{\beta,D}$ and $K_{\beta,D}$ that are needed later.  Section~\ref{S5} is devoted to number-theoretical preparations. We study the denominator height in detail so that we can establish a result on the multiplicative order in $\Z[i]$ (see Lemma \ref{lemorderlowerEs}), which is crucial in the proof of Theorem \ref{thmfinite} and may be of independent interest. Finally, we present the proof of Theorem \ref{thmfinite} in the last section, followed by a comment on Gaussian rational numbers in diagonal self-affine sets in $\R^2$.

\section{Counting results on rational vectors/Gaussian rational numbers in self-similar sets}\label{S2}

In this section, we first prove a counting result on 
rational vectors in self-similar sets on $\R^d$. Then we introduce the denominator height for Gaussian integers and use it to formulate a counting result for Gaussian rational numbers in self-similar sets in $\C$. 

For a bounded set $A\subset\R^d$ and $N\in\N$, we denote by $Q_N(A)$ the number of rational vectors in $A$ whose components have the common denominator $N$, and  $Q_N^*(A)$ the number of rational vectors in $A$ with denominators at most $N$. That is,
\begin{equation*}\label{eqQnEQsNe}
Q_N(A)=\#\left\{\frac{\mathbf{p}}{N}\in A: \mathbf{p}\in \Z^d\right\}, \quad Q_N^*(A)=\#\left\{\frac{\mathbf{p}}{q}\in A: \mathbf{p}\in \Z^d, 1\leq q\leq N\right\}.
\end{equation*}
Here and afterwards, $\#F$ stands for the cardinality of a set $F$.

In the following result, 
we give upper bound estimates for $Q_N(E)$ and $Q_N^*(E)$ for a self-similar set $E\subset\R^d$. 

\begin{thm}\label{thmCounting}
Let $E\subset \R^d$ be a self-similar set generated by  an  IFS $\Phi$ with similarity dimension $s$. 
Then there exists $C>0$ such that for any $N\in\N$, 
\begin{equation}\label{eqbddQnE}
Q_N(E)\leq CN^{s}, \quad Q_N^*(E)\leq CN^{\min\{2s,s+1\}}.
\end{equation}
\end{thm}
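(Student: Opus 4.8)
The plan is to prove the two bounds in a sequence: first a coarse estimate for arbitrary bounded sets phrased via box-counting dimension, and then a refinement that exploits self-similarity to pin down the exponent $s$. I would begin with the claim, which the excerpt already advertises as Proposition~\ref{thmdimB}, that for any bounded $A\subset\R^d$ and any $t$ strictly larger than the upper box-counting dimension $\overline{\dim}_{\rm B}A$, one has $Q_N(A)\ll N^{t}$. The mechanism here is the standard one: two distinct rational vectors with common denominator $N$ are separated by at least $1/N$, so they cannot both sit inside a cube of side $\ll 1/N$. Covering $A$ by $\mathcal{N}(1/N)\ll N^{t}$ cubes of side $1/N$ (which is possible for large $N$ precisely because $t>\overline{\dim}_{\rm B}A$) and noting each cube contains a bounded number of such lattice points gives $Q_N(A)\ll N^{t}$. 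This is the idea attributed to Schleischitz, and it is clean but lossy, since the box dimension may exceed the similarity dimension.

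The second and essential step is to upgrade this to the sharp exponent $s$ using the self-similar structure of $E=\bigcup_{i=1}^{\ell}\phi_i(E)$. The natural device is a renormalization/subdivision argument. Writing $\rho_i$ for the similarity ratios, I would iterate the IFS to a depth chosen so that all cylinder sets $\phi_{\mathbf{w}}(E)$ (for words $\mathbf{w}$) have diameter comparable to $1/N$; the defining relation $\sum_i\rho_i^{s}=1$ guarantees that the total number of such cylinders needed is $\ll N^{s}$, not merely $\ll N^{t}$ for $t$ above the box dimension. Each cylinder, being a scaled copy of $E$ of size $\ll 1/N$, again contains only a bounded number of rational vectors with denominator $N$, so summing over cylinders yields $Q_N(E)\ll N^{s}$. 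The subtlety I expect to manage carefully is that the $\rho_i$ need not be equal, so the word lengths producing comparable diameters vary; the accounting is cleanest by organizing the cylinders into a stopping-time antichain $\{\mathbf{w}:\rho_{\mathbf{w}}\le 1/N<\rho_{\mathbf{w}^-}\}$ and bounding its cardinality through the identity $\sum_{\mathbf{w}}\rho_{\mathbf{w}}^{s}=1$, which controls the antichain count by $\ll N^{s}$. This antichain bookkeeping, reconciling unequal ratios with the common denominator $1/N$, is where I anticipate the main technical obstacle.

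For the second inequality, bounding $Q_N^{*}(E)$, I would simply sum the first estimate over all admissible denominators $1\le q\le N$, giving
\[
Q_N^{*}(E)\le\sum_{q=1}^{N}Q_q(E)\ll\sum_{q=1}^{N}q^{s}\ll N^{s+1}.
\]
This already yields the $N^{s+1}$ branch of $\min\{2s,s+1\}$. The complementary branch $N^{2s}$ is the binding one only when $s<1$, and it would follow from a direct covering argument in the same spirit: a rational vector with denominator at most $N$ lives on a grid of mesh $\ge 1/N$, and counting grid points inside the $\ll N^{s}$ cylinders of diameter $\ll 1/N$ now allows up to $\ll N^{s}$ points per cylinder in the worst case (since the denominator is no longer fixed), producing $\ll N^{s}\cdot N^{s}=N^{2s}$. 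Taking the better of the two bounds according to whether $s\le 1$ or $s>1$ gives the stated $N^{\min\{2s,s+1\}}$, completing the proof modulo the diameter-versus-denominator calibration carried out carefully in the antichain step.
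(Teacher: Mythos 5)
Your first bound, $Q_N(E)\leq CN^{s}$, and the $N^{s+1}$ branch of the second bound are correct and in essence identical to the paper's argument: both reduce the count to the covering estimate $\mathcal{N}_{\delta}(E)\ll \delta^{-s}$ combined with the $1/N$-separation of denominator-$N$ rationals. The only cosmetic difference is that the paper obtains $\mathcal{N}_{\delta}(E)\ll\delta^{-s}$ by citing the proof of Theorem 9.3 in Falconer, whereas you re-derive it via the stopping-time antichain $\{\mathbf{w}:\rho_{\mathbf{w}}\le 1/N<\rho_{\mathbf{w}^-}\}$ and the identity $\sum_{\mathbf{w}}\rho_{\mathbf{w}}^{s}=1$; that is exactly the cited proof, inlined, and it is the step that upgrades the box-dimension exponent of Proposition \ref{thmdimB} to the similarity dimension $s$.

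The $N^{2s}$ branch, however, is not justified as written. The assertion that ``a rational vector with denominator at most $N$ lives on a grid of mesh $\ge 1/N$'' is true for each fixed denominator $q\le N$ separately, but pursuing that line gives only $O(1)$ points per denominator per cylinder, hence $\ll N$ points per cylinder of diameter $\ll 1/N$, and so only the bound $N^{s}\cdot N = N^{s+1}$ again; it cannot produce the claimed ``$\ll N^{s}$ points per cylinder,'' and note that $2s<s+1$ exactly when $s<1$, i.e.\ precisely in the regime where this branch matters. The fact you are missing, and which the paper uses, is the pairwise separation across \emph{unequal} denominators: distinct $\frac{\mathbf{p}}{q},\frac{\mathbf{r}}{t}$ with $1\le q,t\le N$ satisfy $\bigl|\frac{\mathbf{p}}{q}-\frac{\mathbf{r}}{t}\bigr|\ge \frac{1}{qt}\ge \frac{1}{N^2}$. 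With this, the cleanest repair is to run your antichain construction once at scale $1/N^2$ instead of $1/N$: it yields $\ll N^{2s}$ cylinders of diameter $\ll 1/N^2$, each containing $O(1)$ of these $1/N^2$-separated points (this single-scale covering is what the paper does). Alternatively, your two-scale picture can be salvaged by covering each diameter-$1/N$ cylinder by $\ll N^{s}$ sub-cylinders of diameter $\ll 1/N^2$, which legitimizes the per-cylinder count of $N^{s}$. Either way this is a local fix, not a flaw in the overall approach.
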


\begin{rem}\label{remarkA}
{\rm (i)} Clearly, the estimates in \eqref{eqbddQnE} are non-trivial only when $s<d$. 

{\rm (ii)}  Theorem \ref{thmCounting} slightly improves \cite[Theorem 4.1]{Schleischitz21} for self-similar sets. In \cite[Theorem 4.1]{Schleischitz21}, when restricted to self-similar sets, \eqref{eqbddQnE} is proved when $s$ is replaced by $\frac{\log\#\Phi}{-\log\rho_{\max}}$, which is greater than $s$ if $\Phi$ is not homogenous. Here $\rho_{\max}$ denotes the largest similarity ratio of the elements in $\Phi$.

{\rm (iii)} Theorem \ref{thmCounting} also holds when $E$ is an $s$-Ahlfors regular set (not necessarily self-similar); see Remark \ref{remAhlfors} for details.
\end{rem}

As outlined in the introduction, to prove Theorem \ref{thmCounting}, we first give a general counting result for rational vectors in an arbitrary bounded subset of $\R^d$ in terms of its upper box-counting dimension. This is done by adopting an argument from \cite[Theorem 4.1]{Schleischitz21}. Then we show that with the aid of a simple property of self-similar sets, the estimate can be strengthened by slightly modifying the proof, which results in Theorem \ref{thmCounting}.

For a bounded set \(E\subset \mathbb{R}^d\), we denote by \( \mathcal{N}_r(E) \) the smallest number of closed balls of radius \( r \) needed to cover \( E \). The \textit{upper} and \textit{lower box-counting dimensions} of \( E\) are defined by
\[
\overline{\dim}_{\mathrm{B}}E = \varlimsup_{r \to 0^+} \frac{\log \mathcal{N}_r(E)}{-\log r}, \quad \underline{\dim}_{\mathrm{B}}E= \varliminf_{r \to 0^+} \frac{\log \mathcal{N}_r(E)}{-\log r}.
\]
If the upper and lower box-counting dimensions coincide, we call the common value the \textit{box-counting dimension} of $E$.

\begin{prop}\label{thmdimB}
Let $E\subset\R^d$ be a bounded set with $\overline{\dim}_{\rm B}E=s$. Then for any $\epsilon>0$ there exists $C_{\epsilon}>0$ such that for any $N\in\N$, we have 
\[Q_N(E)\leq C_{\epsilon}N^{s+\epsilon}\quad \text{ and } \quad Q_N^*(E)\leq C_{\epsilon}N^{\min\{1+s+\epsilon,\,2(s+\epsilon)\}}.\]
\end{prop}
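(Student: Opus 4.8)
The plan is to reduce both estimates to the covering-number definition of the upper box-counting dimension by exploiting the fact that the rational vectors being counted are well separated. First I would record the elementary separation estimates. Any two distinct rational vectors $\mathbf{p}_1/N,\mathbf{p}_2/N\in\tfrac{1}{N}\Z^d$ differ in at least one coordinate, and there by at least $1/N$; hence the points counted by $Q_N(E)$ form a $\tfrac{1}{N}$-separated set in the $\ell^\infty$ metric. Similarly, for two distinct points $\mathbf{x},\mathbf{y}\in E$ expressible as $\mathbf{p}/q$ and $\mathbf{p}'/q'$ with $1\le q,q'\le N$, in any coordinate $i$ where they differ one has $|x_i-y_i|=|p_iq'-p_i'q|/(qq')\ge 1/(qq')\ge 1/N^2$, since the numerator is a nonzero integer. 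Thus the points counted by $Q_N^*(E)$ form a $\tfrac{1}{N^2}$-separated set in $\ell^\infty$.

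Next I would invoke the standard packing–covering inequality: if $E$ is covered by $\mathcal{N}_r(E)$ Euclidean balls of radius $r$, then any $\delta$-separated subset of $E$ with $2r<\delta$ meets each such ball at most once (two points in one ball satisfy $\|\mathbf{x}-\mathbf{y}\|_\infty\le\|\mathbf{x}-\mathbf{y}\|_2\le 2r<\delta$), so the subset has cardinality at most $\mathcal{N}_r(E)$. Applying this with $\delta=1/N$ and $r=1/(3N)$ gives $Q_N(E)\le \mathcal{N}_{1/(3N)}(E)$, and with $\delta=1/N^2$ and $r=1/(3N^2)$ gives $Q_N^*(E)\le \mathcal{N}_{1/(3N^2)}(E)$; the constant $3$ merely accounts for passing between the $\ell^\infty$ and Euclidean metrics.

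Then I would feed in the definition of $\overline{\dim}_{\rm B}E=s$: for every $\epsilon>0$ there is $r_0>0$ such that $\mathcal{N}_r(E)\le r^{-(s+\epsilon)}$ for all $r<r_0$. Substituting $r=1/(3N)$ and $r=1/(3N^2)$ yields $Q_N(E)\le C_\epsilon N^{s+\epsilon}$ and $Q_N^*(E)\le C_\epsilon N^{2(s+\epsilon)}$ for all large $N$, and the finitely many remaining $N$ (i.e. the range $r\ge r_0$) are absorbed into $C_\epsilon$ using that $E$ is bounded, so that $Q_N(E),Q_N^*(E)$ are finite for each fixed $N$. To obtain the competing exponent $1+s+\epsilon$ for $Q_N^*(E)$, I would simply sum the first estimate over denominators, using $Q_N^*(E)\le\sum_{q=1}^N Q_q(E)\le C_\epsilon\sum_{q=1}^N q^{s+\epsilon}\le C_\epsilon N^{1+s+\epsilon}$. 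Taking the minimum of the two bounds for $Q_N^*(E)$ then produces the exponent $\min\{1+s+\epsilon,\,2(s+\epsilon)\}$, completing the proof.

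The arguments are all elementary, and I do not expect a genuine obstacle; the points requiring care are bookkeeping ones. The separation-to-covering step must keep the inequality $2r<\delta$ in the correct direction, and the passage from $\ell^\infty$-separation to Euclidean covering balls contributes only an absolute constant. The one mild subtlety is that the box-dimension estimate $\mathcal{N}_r(E)\le r^{-(s+\epsilon)}$ is available only for small $r$, so ensuring that $C_\epsilon$ can be chosen uniformly over all $N\in\N$ (not merely large $N$) requires separately treating the bounded range of $r$, which is immediate from the boundedness of $E$.
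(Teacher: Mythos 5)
Your proposal is correct and follows essentially the same route as the paper: bound $Q_N(E)$ and $Q_N^*(E)$ by covering numbers at scales comparable to $1/N$ and $1/N^2$ using the $1/N$- and $1/N^2$-separation of the rational points, invoke the definition of $\overline{\dim}_{\rm B}E$ to control those covering numbers, and obtain the exponent $1+s+\epsilon$ by summing $Q_q(E)$ over $q\leq N$. The only (harmless) differences are bookkeeping: you use radius $1/(3N)$ instead of the paper's $1/(2N)$ and explicitly absorb the finitely many small $N$ into $C_\epsilon$, which the paper leaves implicit.
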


\begin{proof}
We follow an argument in \cite[Theorem 4.1]{Schleischitz21}.
Fix $\epsilon>0$. Since $\overline{\dim}_{\rm B}E=s$, there exists $C_{\epsilon}>0$ such that for any $N\in\N$,
\begin{equation}\label{eqCover}
\mathcal{N}_{\frac{1}{2N}}(E)\leq C_{\epsilon}N^{s+\epsilon} \quad \text{ and }\quad \mathcal{N}_{\frac{1}{2N^2}}(E)\leq C_{\epsilon}N^{2(s+\epsilon)}.
\end{equation}
Notice that for distinct $\mathbf{p}_1,\mathbf{p}_2\in\Z^d$, and distinct rational vectors $\frac{\mathbf{p}}{q}, \frac{\mathbf{r}}{s}$ with $1\leq q, s\leq N$, we have 
\[\left|\frac{\mathbf{p}_1}{N}-\frac{\mathbf{p}_2}{N}\right|\geq \frac{1}{N},\quad  \left|\frac{\mathbf{p}}{q}-\frac{\mathbf{r}}{s}\right|\geq \frac{1}{qs}\geq \frac{1}{N^2}.\]
This implies that 
\[Q_N(E)\leq \mathcal{N}_{\frac{1}{2N}}(E)\quad \text{ and } \quad Q_N^*(E)\leq \mathcal{N}_{\frac{1}{2N^2}}(E).\]
Then it follows from \eqref{eqCover} that 
\begin{equation}\label{eqQNEstar}
Q_N(E)\leq C_{\epsilon}N^{s+\epsilon}\quad \text{ and } \quad Q_N^*(E)\leq C_{\epsilon}N^{2(s+\epsilon)}.
\end{equation}
To show that $Q_N^*(E)\leq C_{\epsilon}N^{1+s+\epsilon}$, just note that by the first inequality in \eqref{eqQNEstar},
\[Q_N^*(E)\leq \sum_{q=1}^{N}Q_{q}(E)\leq \sum_{q=1}^{N}C_{\epsilon}q^{s+\epsilon}\leq C_{\epsilon}N^{1+s+\epsilon}.\]
This proves the proposition. 
\end{proof}

Now we prove Theorem \ref{thmCounting} following the strategy outlined above.

\begin{proof}[Proof of Theorem \ref{thmCounting}]
Since $E\subset \R^d$ is a self-similar set, $E$ satisfies a  property stating that there exists $C'>0$ such that 
\begin{equation}\label{eqNboud}
\mathcal{N}_{\delta}(E)\leq C'\delta^{-s}, \quad \forall\, 0<\delta<1.
\end{equation}
This is well-known and can be seen from the proof  of \cite[Theorem 9.3]{Falconer14}. 
Then the conclusion of the theorem follows from exactly the same proof as in Proposition~\ref{thmdimB} by replacing \eqref{eqCover} with
\[\mathcal{N}_{\frac{1}{2N}}(E)\leq CN^{s} \quad \text{ and }\quad \mathcal{N}_{\frac{1}{2N^2}}(E)\leq CN^{2s},\]
where $C>0$ is a constant independent of $N$. 
\end{proof}

\begin{rem}\label{remAhlfors}
From the proof of Theorem \ref{thmCounting} we see that the estimates in \eqref{eqbddQnE} actually hold for any compact set $E\subset \R^d$ satisfying \eqref{eqNboud}. This is the case when $E$ is an $s$-Ahlfors regular set.   
Recall that a compact set $E\subset\R^d$ is said to be $s$-Ahlfors regular with $s\in (0, d]$, if there exist a Borel probability measure $\mu$ supported on $E$ and a constant $c>0$ such that 
\begin{equation}\label{eqAhlforsregular}
c^{-1}r^s\leq \mu(B(x,r))\leq cr^s,  \quad \forall x\in E,\,\forall 0<r\leq {\rm diam}(E),
\end{equation}
where $B(x,r)$ is the closed ball centered at $x$ of radius $r$, and ${\rm diam}(E)$ denotes the diameter of $E$. 
It is well-known that an $s$-Ahlfors regular set $E\subset\R^d$ satisfies \eqref{eqNboud};  a short proof is as follows: Fix $0<\delta<1$. Let $M$  be the largest number of disjoint closed balls centered in $E$ of radius $\frac{\delta}{2}$.  Then the maximality of $M$ implies that $\mathcal{N}_{\delta}(E)\leq M$. On the other hand, by \eqref{eqAhlforsregular} and considering the total measure of these balls we see that $c^{-1}(\frac{\delta}{2})^sM\leq 1$, yielding that $M\leq 2^sc\delta^{-s}$. Then \eqref{eqNboud} follows by taking $C'=2^sc$. The above comment on Ahlfors regular sets is inspired by the proof of \cite[Lemma 2.3]{KLW25}.
\end{rem}

For the purpose of this paper, we would like to formulate a version of Theorem~\ref{thmCounting} for Gaussian rational numbers in self-similar sets in $\C$. To this end, we introduce the denominator height for Gaussian integers, which plays a key role in this paper.

Given $\alpha=a+bi\in \Z[i]$, the {\em norm} of $\alpha$ is defined as
\begin{equation}\label{eqnorm}
N(\alpha)=a^2+b^2.
\end{equation}
We define the {\em denominator height}\footnote{This should not be confused with the standard notion of \emph{reduced norm} in algebraic number theory, which is typically defined as $N(\alpha) / \gcd(a, b)^2$. Our function $H(\alpha)$ is tailored to better handle the denominators that appear when studying Gaussian rational numbers, as shown below.} of a non-zero $\alpha$ as 
\begin{equation}\label{defReducedNorm}
H(\alpha)=\frac{N(\alpha)}{\gcd(a,b)}.
\end{equation}
We set $H(0)=0$. The denominator height can be regarded as a notion of height for Gaussian integers, well-suited to the problems studied in this paper. Indeed, for any Gaussian rational number of the form $\frac{\omega}{\alpha}$ with $\alpha\neq0$, we have 
\[\frac{\omega}{\alpha}=\frac{\omega\bar{\alpha}}{N(\alpha)}=\frac{\omega\alpha'}{H(\alpha)},\]
where $\alpha'=\bar{\alpha}/\gcd(a,b)$. Using $H$, we can easily formulate a counting result in $\C$ (Theorem \ref{thmAS}) that is consistent in form with the one in $\R^d$ (Theorem~\ref{thmCounting}). Moreover, $H$ also possesses several other desirable properties (cf. Lemma \ref{LemHsubmul} and Lemmas \ref{LemExip}--\ref{LemIIIpower}), which are important in our subsequent development. 

For a bounded set $A\subset \C$ and $N\in\N$, we define 
\[R_{N}(A)=\#\left\{\frac{\omega}{\gamma} \in \Q(i)\cap A: \omega,\gamma\in\Z[i], H(\gamma)=N\right\},\]
\[R_{N}^*(A)=\#\left\{\frac{\omega}{\gamma} \in \Q(i)\cap A:\omega,\gamma\in\Z[i], 1\leq H(\gamma)\leq N\right\}.\]
We give a version of Theorem~\ref{thmCounting} for Gaussian rational numbers in self-similar sets in $\C$ as follows.

\begin{thm}\label{thmAS}
Let $E\subset \C$ be a self-similar set generated by an IFS $\Phi$ with similarity dimension $s$. Then there exists $C>0$ such that for any $N\in\N$, we have 
\[R_N(E)\leq CN^{s} \quad \text{ and } \quad R_N^*(E)\leq CN^{\min\{2s,s+1\}}.\]
\end{thm}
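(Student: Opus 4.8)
My plan is to transfer Theorem~\ref{thmCounting} from $\R^2$ to $\C$ by identifying $\C$ with $\R^2$ and carefully matching up the two notions of denominator. The key point is to understand, for a Gaussian rational $\frac{\omega}{\gamma}\in\C$, how its representation as a point in $\R^2$ relates to the denominator height $H(\gamma)$. Writing $\gamma=a+bi$ with $g=\gcd(a,b)$, the identity displayed just before the statement gives $\frac{\omega}{\gamma}=\frac{\omega\alpha'}{H(\gamma)}$ with $\alpha'=\bar\gamma/g\in\Z[i]$, so that $\frac{\omega}{\gamma}$ equals a Gaussian integer $\omega\alpha'$ divided by the \emph{rational integer} $H(\gamma)$. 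Thus every point of $R_N(E)$, viewed in $\R^2$, is of the form $\frac{\mathbf p}{H(\gamma)}$ with $\mathbf p\in\Z^2$: its two real coordinates are rationals with common denominator dividing $H(\gamma)$.

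\textbf{Matching the counting functions.} With this dictionary in hand, I would argue as follows. For $R_N(E)$: if $H(\gamma)=N$, then the image point lies in $\{\frac{\mathbf p}{N}:\mathbf p\in\Z^2\}$, so each such Gaussian rational is counted (at most once) by $Q_N(E)$ in the sense of Theorem~\ref{thmCounting} applied with $d=2$. Hence $R_N(E)\leq Q_N(E)\leq CN^s$. For the starred version: if $1\leq H(\gamma)\leq N$, then $\frac{\omega}{\gamma}$ is a rational vector in $\R^2$ whose common denominator $H(\gamma)$ is at most $N$, so it is counted by $Q_N^*(E)$, giving $R_N^*(E)\leq Q_N^*(E)\leq CN^{\min\{2s,s+1\}}$. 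Both bounds then follow immediately from Theorem~\ref{thmCounting}, and we are done. In short, the heights have been arranged precisely so that the map $\frac{\omega}{\gamma}\mapsto\frac{\omega\alpha'}{H(\gamma)}$ turns a Gaussian-rational count into an ordinary rational-vector count with the \emph{same} numerical value of the denominator.

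\textbf{The one subtlety to check.} The only genuine obstacle is injectivity of the correspondence, i.e.\ making sure we do not undercount by identifying two distinct Gaussian rationals with the same point of $\R^2$. But this is automatic: the identification of $\C$ with $\R^2$ is a bijection, so distinct Gaussian rationals in $A$ give distinct points of $A\subset\R^2$, and the inequalities $R_N\le Q_N$, $R_N^*\le Q_N^*$ hold as stated (indeed the map sending $\frac{\omega}{\gamma}$ to its real/imaginary parts never collapses two elements of the set being counted). A minor bookkeeping remark is that several Gaussian rationals $\frac{\omega}{\gamma}$ may share the same reduced denominator height $N$ while having different $\gamma$'s; this causes no problem because we are bounding $R_N(E)$ by the count of \emph{all} points of the form $\frac{\mathbf p}{N}$ in $E$, which already absorbs every such representative. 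Thus the proof is a direct reduction, and the real content lives entirely in Theorem~\ref{thmCounting} together with the algebraic identity defining $H$.
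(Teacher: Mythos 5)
Your proof is correct and is essentially identical to the paper's own argument: the paper likewise observes that a Gaussian rational $\frac{\omega}{\gamma}$, written coordinate-wise, has the form $\frac{\mathbf{p}}{H(\gamma)}\in\Q^2$ with $\mathbf{p}\in\Z^2$, and then concludes $R_N(E)\leq Q_N(E)$ and $R_N^*(E)\leq Q_N^*(E)$, so both bounds follow from Theorem~\ref{thmCounting} with $d=2$. Your extra remarks on injectivity and on non-reduced denominators are fine but not needed, since $Q_N$ and $Q_N^*$ count points of the form $\frac{\mathbf{p}}{q}$ without requiring lowest terms.
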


\begin{proof}
Note that any Gaussian rational number $\frac{\omega}{\gamma}\in \Q(i)$, when written coordinate-wise, is of the form $\frac{\mathbf{p}}{H(\gamma)}\in \Q^2$ for some $\mathbf{p}\in \Z^2$.  Then the theorem follows from  Theorem \ref{thmCounting} since we have $R_N(E)\leq Q_N(E)$, and  $R_{N}^*(E)\leq Q_{N}^*(E)$.
\end{proof}

\section{Period estimate for Gaussian rational numbers in $K_{\beta,D}$}\label{S3}

In this section, we apply the counting result (Theorem \ref{thmAS}) to give an upper bound estimate for the period length of an eventually periodic coding for every Gaussian rational number in $K_{\beta,D}$.

Recall that $K_{\beta,D}$ is the self-similar set generated by  the IFS $\Phi_{\beta,D}$ defined in \eqref{eqIFS}. 
Given $\xi\in K_{\beta,D}$, we call $(j_1j_2\ldots)\in\{1,\ldots,\ell\}^{\N}$ a {\em coding} of $\xi$  if 
\[\xi=\lim_{n\to\infty}\phi_{j_1}\circ\cdots\circ\phi_{j_n}(0)=\sum_{i=1}^{\infty}\frac{t_{j_i}}{\beta^i}.\]
The aim of this section is to prove the following result, which is important in the proof of Theorem \ref{thmfinite}.

\begin{thm}\label{lemperiod}
Let $s$ be the similarity dimension of $\Phi_{\beta,D}$.
Then there exists $C_1>0$ such that every  $\frac{\omega}{\gamma}\in K_{\beta,D}\cap \Q(i)$ has an eventually periodic coding whose period length is at most $C_1H(\gamma)^{\min\{s,2\}}$. 
\end{thm}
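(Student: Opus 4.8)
The plan is to combine the counting result with a pigeonhole/automaton argument on the shift dynamics of codings. Since $D=\{t_1,\ldots,t_\ell\}\subset\Q(i)$ is finite, first fix a common denominator $\Delta\in\Z[i]\setminus\{0\}$ with $\Delta t_j\in\Z[i]$ for all $j$. Given $\frac{\omega}{\gamma}\in K_{\beta,D}\cap\Q(i)$, I analyze how denominators behave under shifting. For any coding $(j_1j_2\ldots)$ of a point $\eta=\sum_{i\ge1}t_{j_i}/\beta^i$, the one-step shift on points is $\eta\mapsto\beta\eta-t_{j_1}\in K_{\beta,D}$, corresponding to the shifted coding $(j_2j_3\ldots)$. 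Iterating gives $T^n\eta=\beta^n\eta-\sum_{k=1}^n\beta^{n-k}t_{j_k}$; since $\beta\in\Z[i]$ and $\Delta t_{j}\in\Z[i]$, every shifted point can be written with the \emph{single} denominator $\gamma\Delta$. Consequently all shifted points lie in the finite set
$$Y=\Big\{\zeta\in K_{\beta,D}: \zeta=\tfrac{\mathbf{p}}{H(\gamma\Delta)}\text{ for some }\mathbf{p}\in\Z^2\Big\},$$
the rational vectors in $K_{\beta,D}$ with common (unreduced) denominator $H(\gamma\Delta)$, where I pass to coordinates via the identity $\frac{\omega}{\alpha}=\frac{\omega\alpha'}{H(\alpha)}$.

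Next I bound $\#Y$. Because $K_{\beta,D}\subset\C=\R^2$ is bounded, trivially $Q_N(K_{\beta,D})\le CN^2$; combining this with the estimate $Q_N(K_{\beta,D})\le CN^s$ from Theorem \ref{thmCounting} yields $\#Y\le Q_{H(\gamma\Delta)}(K_{\beta,D})\le C\,H(\gamma\Delta)^{\min\{s,2\}}$. Using the submultiplicativity of $H$ (Lemma \ref{LemHsubmul}) and that $\Delta$ is fixed, $H(\gamma\Delta)\le H(\Delta)H(\gamma)\ll H(\gamma)$, so $\#Y\le C_1 H(\gamma)^{\min\{s,2\}}$ with $C_1$ independent of $\gamma$.

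Finally I reconstruct a periodic coding via a transition graph on $Y$. Put a directed edge $\zeta\to\zeta'$ labelled $d\in D$ whenever $\zeta'=\beta\zeta-d\in Y$, equivalently $\zeta=(\zeta'+d)/\beta$. The denominator computation above shows $\beta\zeta-d$ again has denominator dividing $\gamma\Delta$, and taking the first symbol of any coding of $\zeta\in Y\subset K_{\beta,D}$ shows every vertex has out-degree at least one. Starting from $\frac{\omega}{\gamma}\in Y$ (as $\gamma\mid\gamma\Delta$) and following edges produces a walk $\eta_0=\frac{\omega}{\gamma}\to\eta_1\to\cdots$ whose labels $d_1,d_2,\ldots$ satisfy $\frac{\omega}{\gamma}=\sum_{i=1}^n d_i/\beta^i+\eta_n/\beta^n$; since $K_{\beta,D}$ is bounded and $|\beta|>1$, the remainder $\eta_n/\beta^n\to0$, so $(d_i)$ is a genuine coding. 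As $\#Y\le C_1H(\gamma)^{\min\{s,2\}}$, the walk revisits a vertex within $\#Y$ steps, say $\eta_m=\eta_n$ with $n-m\le\#Y$; looping the cycle $\eta_m\to\cdots\to\eta_n=\eta_m$ indefinitely yields an eventually periodic coding of $\frac{\omega}{\gamma}$ with period length $n-m\le C_1H(\gamma)^{\min\{s,2\}}$.

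The step I expect to be the main obstacle is this last one: because a point of $K_{\beta,D}$ may admit several codings (the IFS need not be injective on codings), the mere recurrence $\eta_m=\eta_n$ of shifted points does not by itself force any single symbol sequence to be periodic. The automaton viewpoint circumvents this, since one does not track the original coding but instead builds a new, deliberately periodic one by walking the finite graph $Y$ and repeating a cycle — this is legitimate precisely because membership in $Y$ and the edge relation $\zeta'=\beta\zeta-d$ are finite, checkable conditions. Care is also needed to verify that multiplication by $\beta$ and subtraction of digits never enlarge the controlling denominator beyond $\gamma\Delta$, which is exactly what confines all shifted points to the single finite set $Y$.
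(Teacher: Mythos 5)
Your proof is correct and takes essentially the same approach as the paper: both arguments bound the number of points of $K_{\beta,D}$ whose denominator divides $\gamma$ times a fixed common denominator of $D$ by $\ll H(\gamma)^{\min\{s,2\}}$ (the counting theorem combined with a trivial quadratic bound and the submultiplicativity of $H$ from Lemma \ref{LemHsubmul}), then pigeonhole among the shifted points $\eta\mapsto\beta\eta-d$ and close up a cycle to produce an eventually periodic coding. Your transition-graph packaging is only cosmetically different from the paper's device of shifting along a fixed coding and invoking the fixed point of the contraction $\phi_{j_{m+1}}\circ\cdots\circ\phi_{j_n}$; in both cases one builds a \emph{new} coding rather than claiming the original one is periodic, so the obstacle you flag is handled in the same way by both arguments.
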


To prove Theorem~\ref{lemperiod}, we employ an argument from the proof of \cite[Theorem 4.3]{Schleischitz21}.
We will see from the proof that the conclusion actually holds for self-similar sets generated by more  general IFSs,  which include for instance $\{\beta_j^{-1}z+\xi_j\}_{j=1}^{\ell}$, where $\beta_j\in\Z[i], \xi_j\in\Q(i)$, $1\leq j\leq \ell$. In that case, as noted in Remark~\ref{remarkA}(ii), Theorem~\ref{lemperiod} slightly improves the exponent in the estimate in \cite[Theorem 4.3]{Schleischitz21}.

We need the following lemma, which shows that the denominator height $H$ defined in \eqref{defReducedNorm} is sub-multiplicative on $\Z[i]$. More properties of $H$ will be established in Section \ref{S5}.

\begin{lem}\label{LemHsubmul}
Let $\alpha,\gamma\in\Z[i]$. Then the following hold.
\begin{itemize}
  \item[(i)] If $\alpha\in \Z\cup(i\Z)$, then $H(\alpha\gamma)=|\alpha|H(\gamma)$.
  \item[(ii)] $H$ is sub-multiplicative: $H(\alpha\gamma)\leq H(\alpha)H(\gamma)$.
\end{itemize}
\end{lem}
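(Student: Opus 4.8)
The plan is to prove both parts by explicit computation with the definition $H(\alpha) = N(\alpha)/\gcd(a,b)$, exploiting the multiplicativity of the norm $N$, namely $N(\alpha\gamma) = N(\alpha)N(\gamma)$.

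For part (i), suppose first $\alpha \in \Z$, say $\alpha = a$ with $b = 0$, so $\gcd(a,b) = \gcd(a,0) = |a|$ and $N(\alpha) = a^2$. Writing $\gamma = c + di$, we have $\alpha\gamma = ac + adi$, so its norm is $a^2(c^2+d^2) = a^2 N(\gamma)$, and its gcd component is $\gcd(ac, ad) = |a|\gcd(c,d)$. Hence $H(\alpha\gamma) = a^2 N(\gamma)/(|a|\gcd(c,d)) = |a| \cdot N(\gamma)/\gcd(c,d) = |\alpha| H(\gamma)$. The case $\alpha \in i\Z$ reduces to the previous one because multiplication by $i$ merely permutes and signs the real and imaginary parts of $\alpha\gamma$: if $\alpha = ai$, then $\alpha\gamma = -ad + aci$, which has the same norm and the same coordinate-gcd as $a\gamma$, so $H(\alpha\gamma) = H(a\gamma) = |a| H(\gamma) = |\alpha| H(\gamma)$. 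The one routine point to check is that $\gcd(ac, ad) = |a|\gcd(c,d)$, which is the standard distributivity of gcd over a common integer factor.

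For part (ii), write $\alpha = a + bi$ and $\gamma = c + di$, and set $g_\alpha = \gcd(a,b)$, $g_\gamma = \gcd(c,d)$. Then
\[
H(\alpha) H(\gamma) = \frac{N(\alpha)N(\gamma)}{g_\alpha g_\gamma} = \frac{N(\alpha\gamma)}{g_\alpha g_\gamma}, \qquad H(\alpha\gamma) = \frac{N(\alpha\gamma)}{g_{\alpha\gamma}},
\]
where $g_{\alpha\gamma}$ denotes the gcd of the real and imaginary parts of the product $\alpha\gamma$. Thus the claimed inequality $H(\alpha\gamma) \le H(\alpha)H(\gamma)$ is equivalent to $g_\alpha g_\gamma \le g_{\alpha\gamma}$, i.e.\ to showing that the product $g_\alpha g_\gamma$ divides $g_{\alpha\gamma}$ (which gives $g_\alpha g_\gamma \le g_{\alpha\gamma}$ once we know both are positive integers and the former divides the latter). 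Since $\alpha\gamma = (ac - bd) + (ad + bc)i$, its two coordinates are $ac - bd$ and $ad + bc$. Now $g_\alpha$ divides both $a$ and $b$, and $g_\gamma$ divides both $c$ and $d$, so $g_\alpha g_\gamma$ divides each of the four products $ac, bd, ad, bc$, and therefore divides both $ac - bd$ and $ad + bc$. Hence $g_\alpha g_\gamma \mid \gcd(ac-bd,\, ad+bc) = g_{\alpha\gamma}$, giving the inequality.

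I do not expect any serious obstacle here; both parts are elementary once the problem is reorganized around the identity $N(\alpha\gamma)=N(\alpha)N(\gamma)$ and the reduction of the inequality to the divisibility statement $g_\alpha g_\gamma \mid g_{\alpha\gamma}$. The only mild subtlety is handling the edge cases where a gcd involves $0$ (e.g.\ ensuring $\gcd(a,0)=|a|$ is used consistently and that the formulas remain valid when $\gamma$ or $\alpha$ is purely real or purely imaginary), and making sure all quantities are interpreted as positive integers so that divisibility upgrades to the desired inequality. These are bookkeeping matters rather than genuine difficulties.
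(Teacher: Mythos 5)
Your proof is correct and follows essentially the same route as the paper: part (ii) in both cases reduces, via the multiplicativity $N(\alpha\gamma)=N(\alpha)N(\gamma)$, to the divisibility $\gcd(a,b)\gcd(c,d)\mid\gcd(ac-bd,\,ad+bc)$, which you establish exactly as the paper does. Your only addition is that you write out the verification of part (i) (including the reduction of the $i\Z$ case to the $\Z$ case), which the paper dismisses as immediate from the definition of $H$.
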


\begin{proof}
Part (i) can be easily verified by the definition of $H$. Below we prove (ii). 
Let $\alpha=a+bi, \gamma=c+di$, where $a,b,c,d\in\Z$. Then $$\alpha\gamma=ac-bd+(ad+bc)i.$$ 
Thus we have
\[H(\alpha\gamma)=\frac{N(\alpha\gamma)}{\gcd(ac-bd, ad+bc)}, \quad H(\alpha)H(\gamma)=\frac{N(\alpha)N(\gamma)}{\gcd(a,b)\gcd(c,d)}.\]
Since $N(\alpha\gamma)=N(\alpha)N(\gamma)$, it suffices to show that
\begin{equation}\label{eqgcdac}
\gcd(ac-bd, ad+bc)\geq \gcd(a,b)\gcd(c,d).
\end{equation} 
To see this, write $s=\gcd(a,b), t=\gcd(c,d)$ and assume $a=sa_1, b=sb_1, c=tc_1,d=td_1$, where $a_1,b_1,c_1,d_1\in\Z$. Then 
\[ac-bd=st(a_1c_1-b_1d_1), \quad ad+bc=st(a_1d_1+b_1c_1),\]
which yields that $st\mid\gcd(ac-bd, ad+bc)$. Hence \eqref{eqgcdac} holds and we finish the proof.  
\end{proof}

Now we give the proof of Theorem \ref{lemperiod}.

\begin{proof}[Proof of Theorem \ref{lemperiod}]
Let \(\xi=\frac{\omega}{\gamma}\in K_{\beta,D}\cap\mathbb{Q}(i) \), where $\omega,\gamma\in \Z[i], \gamma\neq0$. Let \((j_1j_2\ldots)\in \{1,\ldots,\ell\}^{\N}\) be a coding  of \(\xi\), i.e.,
\[\xi=\lim_{n\to\infty}\phi_{j_1}\circ\cdots\circ\phi_{j_n}(0)=\sum_{i=1}^{\infty}\frac{t_{j_i}}{\beta^i}.\]
We consider the sequence \[\xi_0 = \xi,\quad \xi_1=\phi_{j_1}^{-1}(\xi),\quad \xi_2 = \phi_{j_2}^{-1} \circ \phi_{j_1}^{-1}(\xi),  \ldots.\] 
Clearly, \( \xi_k \in K_{\beta,D}\cap\mathbb{Q}(i)\)  for \(k\geq 0 \). Let $s_j\in \Z[i]$ be the denominator of $t_j$ in the IFS $\Phi_{\beta,D}$ (cf. \eqref{eqIFS}), $1\leq j\leq \ell$, and let $\Gamma=s_1\cdots s_{\ell}$. Then from the expressions of the mappings $\phi_1,\ldots, \phi_{\ell}$ in $\Phi_{\beta,D}$, we see that the denominators of $\xi_k$ ($k\geq 0$) are all divisors of $\gamma \Gamma$.  Notice that such Gaussian rational numbers are $\frac{1}{H(\gamma\Gamma)}$-separated, and by Lemma \ref{LemHsubmul} we have $H(\gamma\Gamma)\leq H(\gamma)H(\Gamma)$. Since \( K_{\beta,D} \) is compact, it follows that there are at most 
\[ (H(\gamma\Gamma){\rm diam}(K_{\beta,D})+1)^2 \ll H(\gamma)^2 \]
many such Gaussian rational numbers in $K_{\beta,D}$. 
On the other hand, by  Theorem \ref{thmAS} there are at most \( \ll H(\gamma)^s \) such Gaussian rational numbers in $K_{\beta,D}$. Hence the cardinality of the set $\{\xi_k:k\geq 0\}$ is \( \ll H(\gamma)^{\min\{s,2\}} \). This implies that \(\xi_m = \xi_n \) for some \( m<n\ll H(\gamma)^{\min\{s,2\}}\), which means that 
\[ \xi_m = \phi_{j_{m+1}} \circ \phi_{j_{m+2}} \circ \cdots \circ \phi_{j_{n}}(\xi_m).\]
From this we see that $\xi_{m}=\lim_{k\to\infty}\psi^k(0)$, where $\psi=\phi_{j_{m+1}} \circ \phi_{j_{m+2}} \circ \cdots \circ \phi_{j_{n}}$ and $\psi^k$ is the $k$-fold composition of $\psi$. It follows that $\xi_m$ has a coding $(j_{m+1}j_{m+2}\ldots j_n)^{\infty}$. 
Thus $\xi$ has a coding \( (j_1\ldots j_m(j_{m+1}j_{m+2}\ldots j_n)^{\infty})\) with $n\ll H(\gamma)^{\min\{s,2\}}$.
\end{proof}

\section{More preliminaries on $\Phi_{\beta,D}$ and $K_{\beta,D}$}\label{S4}

In this section, we establish several properties of $\Phi_{\beta,D}$ and $K_{\beta,D}$. The main objects are Lemmas~\ref{LemWSP} and \ref{LemComposition}, which will enable us to pass from the assumption that $\dim_{\rm H}K_{\beta,D}<1$ to that certain composition of $\Phi_{\beta,D}$ has similarity dimension less than $1$. Our strategy in this section is similar to that in \cite{KLW25}. A difference is that we choose to use Lemma \ref{LemComposition} rather than referring to results in \cite{FHOR15O}, which makes this paper more self-contained.

We first introduce some notation.
Given an IFS $\Phi=\{\phi_i\}_{i=1}^{\ell}$ on $\R^d$ which generates a self-similar set $E$, let $\Sigma=\{1,\ldots, \ell\}$ be the alphabet associated to $\Phi$. For each $n\in\N$, let $$\Sigma_n=\{i_1i_2\ldots i_n: i_k\in\Sigma, 1\leq k\leq n\}.$$ Moreover, set $\Sigma_0=\{\varepsilon\}$, where $\varepsilon$ denotes the empty word. Let $\Sigma_*=\bigcup_{n=0}^{\infty}\Sigma_n$ and $\Sigma^{\N}$ be the collection of infinite words over $\Sigma$. For $I=i_1\ldots i_n\in \Sigma_n$, write $\phi_{I}=\phi_{i_1}\circ\cdots\circ\phi_{i_n}$ and let $\rho_I$ denote the similarity ratio of $\phi_I$. In particular, $\rho_{\varepsilon}=1$ and $\phi_{\varepsilon}:=id$, the identity map on $\R^d$. We use $\Phi^n$ to denote the $n$-fold composition of $\Phi$. That is, $\Phi^n=\{\phi_I: I\in\Sigma_n\}$. More generally, given $b\in (0,1)$, let 
\begin{equation}\label{eqPhib}
\Phi_{b}=\{\phi_{I}: I\in \Sigma_*, b\rho_{\min}<\rho_{I}\leq b\},
\end{equation}
where $\rho_{\min}=\min\{\rho_1,\ldots, \rho_{\ell}\}$. It is well-known that $\Phi_b$ and $\Phi^n$ are both generating IFSs for $E$, i.e., $E=\bigcup_{f\in \Phi_b}f(E)=\bigcup_{g\in\Phi^n}g(E)$; see e.g. \cite{Zerner96}.
 Set 
 \begin{equation}\label{eqFb}
\mathcal{F}=\{\phi_{I}^{-1}\circ\phi_{J}: I,J\in\Sigma_*\}, \quad \mathcal{F}_b=\{f^{-1}\circ g: f,g\in \Phi_b\}.
 \end{equation}
We say that $\Phi$ satisfies the {\em weak separation property} (WSP) if the identity map is not an accumulation point of $\mathcal{F}$ (cf. \cite{LN99M} and \cite{Zerner96}).
 
We will show that the IFS $\Phi_{\beta,D}$ defined in \eqref{eqIFS} satisfies the WSP whenever $K_{\beta,D}$ is not contained in  a line in $\C$. As a consequence, an approximation property holds (see Lemma \ref{LemComposition}), which will enable us to use the assumption that $\dim_{\rm H}K_{\beta,D}<1$ in the proof of Theorem \ref{thmfinite}. To this end, we make use of the following characterization of the WSP due to Zerner \cite{Zerner96}. 

\begin{prop}\cite[Theorem 1]{Zerner96}\label{propWSC}
Let \(E\subset\R^d\) be a self-similar set generated by an IFS \(\Phi\). Suppose $E$ is not contained in a hyperplane in $\R^d$. Then  \(\Phi\) satisfies the WSP if and only if there exist \( x_0 \in \mathbb{R}^d \) and \( \epsilon > 0 \) such that  for any \(b\in (0,1)\),
\[
\forall\, h \in \mathcal{F}_{b}, \quad h(x_0) \neq x_0 \implies |h(x_0) - x_0| > \epsilon.
\]
\end{prop}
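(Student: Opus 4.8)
The plan is to work in the group $\mathrm{Sim}(d)$ of similitudes of $\R^d$, writing each $h$ as $h(x)=rOx+v$ with ratio $r>0$, orthogonal part $O\in\mathrm{O}(d)$ and translation $v$, and metrizing $\mathrm{Sim}(d)$ so that $h\to\mathrm{id}$ means $r\to 1$, $O\to I$, $v\to 0$. The first thing I would record is a uniform bound on ratios: every $h=f^{-1}\circ g\in\mathcal{F}_b$ has ratio $\rho_J/\rho_I\in(\rho_{\min},\rho_{\min}^{-1})$, a compact range independent of $b$. Moreover, if $h=\phi_I^{-1}\circ\phi_J\in\mathcal{F}$ is close enough to $\mathrm{id}$, then $\rho_J/\rho_I$ is close to $1$, so $\rho_I$ and $\rho_J$ are comparable and one may choose $b$ (say $b=\max\{\rho_I,\rho_J\}$) with $\phi_I,\phi_J\in\Phi_b$; hence every element of $\mathcal{F}$ near $\mathrm{id}$ already lies in $\bigcup_b\mathcal{F}_b$. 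Consequently the WSP is equivalent to saying that $\mathrm{id}$ is not an accumulation point of $\bigcup_b\mathcal{F}_b$, and I may restrict attention to this bounded-ratio family throughout.

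For the implication from the point condition to the WSP I would argue by contradiction. If the WSP fails, there are $h_n\in\bigcup_b\mathcal{F}_b\setminus\{\mathrm{id}\}$ with $h_n\to\mathrm{id}$; then $h_n(x_0)\to x_0$, so the gap hypothesis forces $h_n(x_0)=x_0$ for all large $n$. Thus each such $h_n$ is a nontrivial similitude fixing $x_0$, of the form $h_n(x)=r_nO_n(x-x_0)+x_0$ with $r_nO_n\to I$ but $r_nO_n\neq I$. The idea is then to move the fixed point off $x_0$ by conjugation: for a word $K\in\Sigma_*$, the map $\phi_K^{-1}\circ h_n\circ\phi_K=\phi_{I_n K}^{-1}\circ\phi_{J_n K}$ again lies in $\mathcal{F}$, has the same linear part $r_nO_n$ (hence the same ratio, so again sits in some $\mathcal{F}_{b'}$), but fixes $\phi_K^{-1}(x_0)$ rather than $x_0$. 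A short computation gives $\phi_K^{-1}h_n\phi_K(x_0)-x_0=(r_nO_n-I)\bigl(x_0-\phi_K^{-1}(x_0)\bigr)$. This is where the non-hyperplane hypothesis enters: it should yield a finite collection of words $K$ for which the vectors $x_0-\phi_K^{-1}(x_0)$ span $\R^d$ (the expanding images $\phi_K^{-1}(x_0)$ cannot all lie in one hyperplane), so at least one avoids the kernel of the nonzero matrix $r_nO_n-I$; for that $K$ the displacement is nonzero yet tends to $0$, contradicting the uniform gap $\epsilon$.

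For the reverse implication I would read the point condition as the statement that $x_0$ is isolated, with a definite gap, inside its orbit $\{h(x_0):h\in\bigcup_b\mathcal{F}_b\}$. Given the WSP, the relevant elements — those with $h(E)\cap E\neq\emptyset$ — form a relatively compact subset of $\mathrm{Sim}(d)$ (bounded ratio, $O\in\mathrm{O}(d)$, and bounded translation because $h(E)$ meets the bounded set $E$), on which $\mathrm{id}$ is not an accumulation point; one then chooses $x_0$, using the non-hyperplane hypothesis so that the fixed-point sets $\mathrm{Fix}(h)$ are proper affine subspaces, and a compactness/limiting argument shows that any sequence of distinct relevant $h_n$ with $h_n(x_0)\to x_0$ would produce a nontrivial near-identity element fixing $x_0$, which the conjugation mechanism of the previous paragraph turns into a violation of the WSP. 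I expect the main obstacle to be exactly this single-point reduction: an individual similitude can be far from $\mathrm{id}$ yet fix or barely move $x_0$ (a rotation about $x_0$, or a map whose small singular direction is aligned with $x_0-\mathrm{Fix}(h)$), so the real work is to prove that one well-chosen $x_0$ nevertheless witnesses a uniform gap across the infinitely many elements of $\bigcup_b\mathcal{F}_b$. The non-hyperplane hypothesis is what excludes these degeneracies, and the suffix-conjugation trick (moving fixed points while preserving ratios) is the device that makes a single reference point sufficient.
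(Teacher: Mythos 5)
You should first be aware that the paper contains no proof of this statement: it is quoted directly from Zerner \cite{Zerner96} (his Theorem~1), so there is no in-paper argument to compare against, and I can only assess your proposal on its own terms.

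Your forward direction (pointwise gap condition $\Rightarrow$ WSP) is essentially correct. The reduction of the WSP to non-accumulation of the identity in $\bigcup_b\mathcal{F}_b$ works, and the conjugation identity $\phi_K^{-1}\circ h_n\circ\phi_K=\phi_{I_nK}^{-1}\circ\phi_{J_nK}$ is exactly the right device. Two points need repair, both fixable. First, the linear part of the conjugate is $r_nO_K^{-1}O_nO_K$, not $r_nO_n$; this is harmless since it is still $\neq I$ and still tends to $I$. Second, the spanning claim for the vectors $x_0-\phi_K^{-1}(x_0)$ is asserted (``it should yield'') but not proved, and it does require an argument: if all points $\phi_K^{-1}(x_0)$, $K\in\Sigma_*$, lay in a proper affine subspace, then the affine hull $A'$ of $\{x_0\}\cup\{\phi_K^{-1}(x_0):K\in\Sigma_*\}$ satisfies $A'\subseteq\phi_k(A')$ for every letter $k$ (the generating set is mapped into itself by each $\phi_k^{-1}$), hence $\phi_k(A')=A'$ by dimension count, hence the closed invariant set $A'$ contains $E$, contradicting the hyperplane hypothesis. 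Alternatively, rewriting the displacement as $\rho_K^{-1}\bigl|(r_nO_n-I)\bigl(\phi_K(x_0)-x_0\bigr)\bigr|$ lets you use the contracted images $\phi_K(x_0)$, whose closure contains $E$, and the spanning is then immediate from the hypothesis.

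The reverse direction (WSP $\Rightarrow$ pointwise gap) is where your proposal has a genuine gap, and it is the hard half of Zerner's theorem. Your plan is to extract, from distinct elements $h_n$ with $0<|h_n(x_0)-x_0|\to0$, ``a nontrivial near-identity element fixing $x_0$,'' but the failure mode you yourself identify --- $h_n$ accumulating at a non-identity similitude $g$ with $g(x_0)=x_0$ --- is not excluded by anything you propose. Since $\mathcal{F}=\{\phi_I^{-1}\circ\phi_J\}$ is not a group, the natural candidates $h_nh_m^{-1}$, $h_m^{-1}h_n$, $g^{-1}h_n$ do not lie in $\mathcal{F}$; the only maps you can legally form from $h_n=\phi_{I_n}^{-1}\phi_{J_n}$ are $\phi_{I'}^{-1}h_n\phi_{J'}=\phi_{I_nI'}^{-1}\phi_{J_nJ'}$, and for $h_n$ near $g$ these are near $\phi_{I'}^{-1}g\,\phi_{J'}$, which is near the identity only in the special case $g=\phi_{I'}\phi_{J'}^{-1}$. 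So the conjugation mechanism transports near-identity elements; it does not create them from elements near a non-identity $g$, and ruling out such accumulation points at a well-chosen $x_0$ is precisely the content of this implication. (The restriction to $h$ with $h(E)\cap E\neq\emptyset$ is a lesser issue --- the statement quantifies over all of $\mathcal{F}_b$ --- and can be sidestepped by restricting instead to $|h(x_0)-x_0|\le 1$.) Note finally that the paper only ever invokes the direction you essentially do prove: in Lemma~\ref{LemWSP} it verifies the pointwise condition at $x_0=0$ and concludes the WSP. A patched version of your forward argument would therefore serve the paper's needs, but it is not a proof of the quoted equivalence.
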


By applying Proposition \ref{propWSC}, we have the following.

\begin{lem}\label{LemWSP}
If $K_{\beta,D}$ is not contained in a line in $\C$, then $\Phi_{\beta,D}$ satisfies the WSP. 
\end{lem}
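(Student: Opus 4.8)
The plan is to verify the weak separation property via Zerner's criterion, Proposition \ref{propWSC}. Since $K_{\beta,D}$ is assumed not to lie in a line in $\C$, it is not contained in a hyperplane of $\R^2$, so Proposition \ref{propWSC} applies. Thus I need to exhibit $x_0\in\C$ and $\epsilon>0$ such that for every $b\in(0,1)$ and every $h\in\mathcal{F}_b$, the condition $h(x_0)\neq x_0$ forces $|h(x_0)-x_0|>\epsilon$. The first step is to describe the elements of $\mathcal{F}_b$ explicitly. A direct induction gives, for $I=i_1\ldots i_n\in\Sigma_n$, the formula $\phi_I(z)=\beta^{-n}z+\sum_{k=1}^{n}t_{i_k}\beta^{-k}$, and hence $\phi_I^{-1}(w)=\beta^{n}w-\sum_{k=1}^{n}t_{i_k}\beta^{n-k}$.

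The crucial observation is that $\Phi_{\beta,D}$ is homogeneous: every generator has similarity ratio $\rho_{\min}=|\beta|^{-1}$, so $\rho_I=|\beta|^{-|I|}$ depends only on the length of $I$. Consequently, for each $b\in(0,1)$ there is a unique integer $n=n(b)$ with $b\rho_{\min}<|\beta|^{-n}\leq b$, and $\Phi_b=\Phi^{n}$ consists exactly of the maps $\phi_I$ with $|I|=n$. Therefore any $h=\phi_I^{-1}\circ\phi_J\in\mathcal{F}_b$ has $|I|=|J|=n$, and substituting the formulas above shows that the linear parts cancel, so $h$ is the pure translation
\[
h(z)=z+c,\qquad c=\sum_{k=1}^{n}(t_{j_k}-t_{i_k})\beta^{n-k}.
\]
In particular $h(x_0)-x_0=c$ for every $x_0$, so it suffices to bound $|c|$ from below whenever $c\neq 0$.

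To obtain a uniform gap, I would fix a common denominator $\Gamma\in\Z[i]\setminus\{0\}$ with $\Gamma t_j\in\Z[i]$ for all $1\leq j\leq\ell$ (for instance $\Gamma=s_1\cdots s_\ell$ as in the proof of Theorem \ref{lemperiod}). Then $\Gamma(t_{j_k}-t_{i_k})\in\Z[i]$ and $\beta^{n-k}\in\Z[i]$, so $\Gamma c\in\Z[i]$. When $c\neq 0$, the number $\Gamma c$ is a nonzero Gaussian integer, whence $|\Gamma c|\geq 1$ and therefore $|c|\geq|\Gamma|^{-1}$. Taking $x_0=0$ and $\epsilon=\tfrac12|\Gamma|^{-1}$, the criterion of Proposition \ref{propWSC} is met, and the lemma follows. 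The heart of the argument is the homogeneity observation that collapses every element of $\mathcal{F}_b$ to a translation; once this is in place, the discreteness of $\Z[i]$ supplies the required uniform lower bound, while the hypothesis that $K_{\beta,D}$ avoids a line serves only to license the use of Zerner's characterization. I expect the main subtlety to be the bookkeeping showing that membership in $\Phi_b$ genuinely forces equal word lengths (hence translations), which is exactly where homogeneity of $\Phi_{\beta,D}$ is used.
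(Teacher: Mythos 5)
Your proof is correct and follows essentially the same route as the paper's: both invoke Zerner's criterion (Proposition \ref{propWSC}), use homogeneity of $\Phi_{\beta,D}$ to conclude that every element of $\mathcal{F}_b$ is a translation $z\mapsto z+\sum_{k=1}^{n}(t_{j_k}-t_{i_k})\beta^{n-k}$ coming from a pair of equal-length words, and then clear denominators with a fixed $\Gamma\in\Z[i]\setminus\{0\}$ so that a nonzero translation has modulus at least $|\Gamma|^{-1}$, taking $x_0=0$ and $\epsilon=\tfrac{1}{2}|\Gamma|^{-1}$. The only cosmetic difference is that you spell out the unique word length $n(b)$ determined by $b$, which the paper leaves implicit.
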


\begin{proof}
Fix $b\in (0,1)$. Since $\Phi:=\Phi_{\beta,D}$ is homogeneous, we see from the definitions of $\Phi_b$ and $\mathcal{F}_b$ (cf. \eqref{eqPhib}-\eqref{eqFb}) that there exists $n\in\N$ such that  $\mathcal{F}_{b}=\{\phi_{I}^{-1}\circ \phi_{J}: I,J\in \Sigma_n\}$. Let $h=\phi_{I}^{-1}\circ \phi_{J}$ for some $I=i_1\ldots i_{n}, J=j_1\ldots j_n\in \Sigma_n$. Then 
\[h(x)=x+\sum_{k=1}^{n}t_{j_k}\beta^{n-k}-\sum_{k=1}^{n}t_{i_k}\beta^{n-k}.\]
Since $D=\{t_1,\ldots, t_{\ell}\}\subset \Q(i)$ is a finite set, we can find a non-zero $\Theta\in \Z[i]$ so that $\Theta t_j\in \Z[i]$ for all $1\leq j\leq \ell$. Therefore, if $h(0)\neq 0$, then we have
\[|\Theta h(0)|=\left|\sum_{k=1}^{n}\Theta t_{j_k}\beta^{n-k}-\sum_{k=1}^{n}\Theta t_{i_k}\beta^{n-k}\right|\geq 1,\]
since $\Theta h(0)\in \Z[i]\setminus\{0\}$. Let $x_0=0$ and $\epsilon=1/(2|\Theta|)$. Since $b\in (0,1)$ is arbitrary,  it follows from Proposition \ref{propWSC} that $\Phi_{\beta,D}$ satisfies the WSP. 
\end{proof}

In our proof of Theorem \ref{thmfinite}, we  will make use of the following property for IFSs satisfying the WSP.

\begin{lem}\label{LemComposition}
Let $E\subset\R^d$ be a self-similar set  generated by an IFS $\Phi$ satisfying the WSP. Suppose $E$ is not contained in a hyperplane in $\R^d$. Then for any $t>\dim_{\rm H}E$ there exists a generating IFS $\Psi$ of $E$ having similarity dimension less than $t$. If $\Phi$ is homogeneous, then $\Psi$ can be chosen to be $\Phi^n$ for some $n\in\N$.
\end{lem}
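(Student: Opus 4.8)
The plan is to deduce the existence of $\Psi$ from the weak separation property by exploiting the known fact that for IFSs satisfying the WSP, the Hausdorff dimension of $E$ is realized as a kind of ``separated'' exponential growth rate of the number of essentially distinct maps in the sub-semigroup generated by $\Phi$. Concretely, the central quantity to control is, for each scale $b\in(0,1)$, the number of \emph{genuinely distinct} maps among $\Phi_b$ — that is, the cardinality of $\Phi_b$ after identifying maps that agree (or nearly agree) on $E$. I expect that the WSP forces this ``reduced'' count to grow no faster than $b^{-t}$ for every $t>\dim_{\rm H}E$, and this is exactly the input that will let me build a generating IFS of small similarity dimension.

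First I would make precise the notion of distinctness. Using the WSP via Proposition \ref{propWSC}, fix $x_0$ and $\epsilon>0$ so that any $h\in\mathcal{F}_b$ with $h(x_0)\neq x_0$ satisfies $|h(x_0)-x_0|>\epsilon$. Then for maps $f,g\in\Phi_b$, declare them equivalent if $f^{-1}\circ g$ fixes $x_0$; the WSP guarantees that inequivalent maps send $x_0$ to $\epsilon$-separated points (after rescaling by the common ratio order $\sim b$). Next I would count inequivalent maps: since all maps in $\Phi_b$ have similarity ratio comparable to $b$ and their images cover $E$, the images $\{f(E):f\in\Phi_b\}$ have bounded overlap once we pass to one representative per equivalence class, because the $\epsilon$-separation prevents too many representatives from clustering. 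A standard covering/measure argument (using a covering of $E$ by $\sim b^{-\dim_{\rm H}E-\eta}$ balls of radius $b$, as in the box-counting bound behind \eqref{eqNboud}) then yields that the number $M_b$ of equivalence classes satisfies $M_b\ll b^{-t}$ for any fixed $t>\dim_{\rm H}E$.

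Given this bound, I would construct $\Psi$ by choosing one representative $f$ from each equivalence class in $\Phi_b$ for a suitably small $b$. The collection of these representatives is still a generating IFS for $E$: each discarded map $g$ is equivalent to a retained $f$, meaning $f(E)=g(E)$ (equivalence forces the images to coincide, since two similitudes of $E$ agreeing up to a map fixing $x_0$ and respecting the self-similar structure have the same image), so the union of images over the representatives still equals $\bigcup_{f\in\Phi_b}f(E)=E$. Its similarity dimension $s_b$ satisfies $\sum \rho_f^{s_b}=1$ with each $\rho_f\asymp b$ and only $M_b\ll b^{-t}$ terms, which gives $b^{-t}\cdot b^{s_b}\gtrsim 1$, hence $s_b\leq t$ for $b$ small enough; shrinking $b$ or adjusting the representative count makes the inequality strict. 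For the homogeneous case, $\Phi_b$ coincides with $\Phi^n$ for the appropriate $n$ (as already observed in the proof of Lemma \ref{LemWSP}), and all ratios are exactly $\rho^n$; then pruning to inequivalent maps and comparing $\#\Phi^n_{\mathrm{red}}\leq \rho^{-nt}$ shows $\Phi^n$ itself has similarity dimension below $t$ for large $n$, so we may take $\Psi=\Phi^n$.

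The main obstacle I anticipate is the counting step: rigorously showing that the number of inequivalent maps in $\Phi_b$ is $\ll b^{-t}$. This requires turning the pointwise $\epsilon$-separation at $x_0$ furnished by the WSP into a genuine bound on how many representative images $f(E)$ can overlap a fixed region, which is where the self-similar structure and an Ahlfors-type regularity or covering estimate for $E$ must be invoked carefully. A secondary subtlety is verifying that ``equivalent'' maps (those whose quotient fixes $x_0$) have \emph{identical} images on $E$, not merely nearby ones, so that discarding redundant maps genuinely preserves the attractor; this should follow from the rigidity of similitudes that stabilize the non-degenerate set $E$, but it must be checked to ensure $\Psi$ really regenerates $E$.
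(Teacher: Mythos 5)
Your construction has a fatal gap at its central step: the claim that two maps $f,g\in\Phi_b$ with $f^{-1}\circ g(x_0)=x_0$ must satisfy $f(E)=g(E)$. Agreement at the single point $x_0$ constrains nothing about the linear parts of $f$ and $g$; in $\R^2$, say, $f$ and $g$ can differ by a nontrivial rotation about the common point $f(x_0)=g(x_0)$, and then $f(E)\neq g(E)$ unless $E$ happens to be invariant under that rotation. The ``rigidity'' you appeal to is unavailable: $f^{-1}\circ g$ fixes $x_0$, but there is no reason whatsoever for it to stabilize $E$. Consequently, after pruning $\Phi_b$ to one representative per equivalence class, the union of the representative images can be a proper subset of $E$, so your $\Psi$ need not be a generating IFS for $E$ and the construction collapses. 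Note that this objection vanishes exactly in the homogeneous case: every map in $\Phi^n$ then has the same linear part, so ``agree at $x_0$'' coincides with ``equal as maps'' and pruning only removes literal duplicates. But the lemma is stated for a general IFS $\Phi$ satisfying the WSP, and your argument does not cover that case.

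There is a second, independent gap in the counting step. You bound the number of equivalence classes by covering $E$ with $\sim b^{-(\dim_{\rm H}E+\eta)}$ balls of radius $b$, ``as in the box-counting bound behind \eqref{eqNboud}''. But \eqref{eqNboud} gives $\mathcal{N}_b(E)\ll b^{-s}$ with $s$ the \emph{similarity} dimension of $\Phi$, and in the only situation where the lemma has content (overlaps, so that $s>t>\dim_{\rm H}E$) this exponent is exactly the one you cannot afford. What your argument needs is $\overline{\dim}_{\rm B}E=\dim_{\rm H}E$ for self-similar sets without any separation hypothesis — a true but nontrivial theorem (Falconer's implicit-methods result), which appears nowhere in the paper and which you never invoke. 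For contrast, the paper's proof sidesteps both issues by directly citing Zerner: by Theorem 2 and Lemma 2 of \cite{Zerner96}, the WSP together with the non-degeneracy hypothesis gives $\dim_{\rm H}E=\lim_{b\to 0}s_b$, where $s_b$ is the similarity dimension of $\Phi_b$ viewed as a set of distinct maps (which automatically still generates $E$); one then takes $\Psi=\Phi_b$ for $b$ small, and $\Phi_b=\Phi^n$ in the homogeneous case. In effect you are attempting to reprove Zerner's theorem, and the two places where your sketch is vague are precisely where its real content lies.
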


\begin{proof}
This is a simple consequence of the results in \cite{Zerner96}. Indeed, according to Theorem 2 and Lemma 2 in \cite{Zerner96}, we have $\dim_{\rm H}E=\lim_{b\to0}s_b$, where $s_b$ is the similarity dimension of the IFS $\Phi_{b}$ defined in \eqref{eqPhib}. Then for any $t>\dim_{\rm H}E$, the first part of the lemma follows by taking $\Psi=\Phi_b$ with $s_b<t$. To see the second part of the lemma, simply notice that when $\Psi$ is homogeneous, $\Phi_b=\Phi^n$ for some $n\in\N$.
\end{proof}

Finally, we give a simple fact on $K_{\beta,D}$.

\begin{lem}\label{lemnotline}
If $\beta\in\Z[i]\setminus \Z$ and $K_{\beta,D}$ is not a singleton, then $K_{\beta,D}$ is not contained in a line in $\C$. 
\end{lem}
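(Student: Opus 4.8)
The plan is to argue by contraposition: I will assume that $K_{\beta,D}$ is contained in some line $L\subset\C$ and show that either $\beta\in\Z$ or $K_{\beta,D}$ is a singleton. Since $K_{\beta,D}=\bigcup_{j=1}^\ell\phi_j(K_{\beta,D})$ with each $\phi_j(z)=(z+t_j)/\beta$ a similitude, the key structural observation is that each $\phi_j$ maps the line $L$ onto the line $\phi_j(L)$, and this image line must again meet $K_{\beta,D}$. First I would reduce to understanding how multiplication by $1/\beta$ (equivalently by $\beta$) acts on the direction of a line. Writing $L$ through a point $z_0$ with direction vector $v\in\C\setminus\{0\}$, the map $\phi_j$ sends $L$ to a line with direction $v/\beta$. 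The direction of a line in $\C$ is well-defined only up to a nonzero real scalar, i.e.\ as an element of $\C^\times/\R^\times$, which is naturally the real projective line / the space of directions parametrized by $\arg(v)\bmod \pi$.

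The heart of the argument is this: if $K_{\beta,D}$ has at least two points, then it contains a genuine line segment's worth of constraint, and because $K_{\beta,D}$ is invariant under the IFS, the common direction of $L$ must be preserved (up to real scaling) under multiplication by $1/\beta$. Concretely, if $K_{\beta,D}\subset L$ is not a singleton, then both $K_{\beta,D}$ and its image $\phi_j(K_{\beta,D})\subset K_{\beta,D}$ lie on $L$ and each is non-degenerate; since $\phi_j(K_{\beta,D})$ lies on the line $\phi_j(L)$ of direction $v/\beta$, and it also lies on $L$ of direction $v$, these two lines must coincide (two distinct lines meet in at most one point, contradicting non-degeneracy). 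Hence $v/\beta$ and $v$ are parallel as real directions, which forces $\beta\in\R$. Combined with $\beta\in\Z[i]$, this gives $\beta\in\Z$, which is exactly the contrapositive of what we want.

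The main steps, in order, are: (1) phrase ``$L$ has direction $v$'' precisely and note the direction is only defined modulo $\R^\times$; (2) use self-similarity to get $\phi_j(K_{\beta,D})\subseteq K_{\beta,D}\subseteq L$ for each $j$; (3) observe that $\phi_j(K_{\beta,D})$ is a non-singleton (since $\phi_j$ is injective and $K_{\beta,D}$ is assumed non-singleton) lying on the line $\phi_j(L)$; (4) conclude $\phi_j(L)=L$ as sets, so $v/\beta\in\R^\times\cdot v$, i.e.\ $1/\beta\in\R$, whence $\beta\in\R\cap\Z[i]=\Z$. The step requiring the most care is (4): one must ensure that the image set is genuinely non-degenerate so that the equality of lines is forced rather than merely an intersection in a single point. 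Because $\phi_j$ is a bijective similitude, it carries the (at least two) distinct points of $K_{\beta,D}$ to two distinct points, and both images lie on $L$ and on $\phi_j(L)$ simultaneously; two lines sharing two distinct points are identical, so this is clean.

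I expect the only subtlety—rather than a true obstacle—to be bookkeeping about directions modulo real scalars, and the harmless possibility that $v/\beta$ could equal a \emph{negative} real multiple of $v$; this still yields $1/\beta\in\R^\times$ and hence $\beta\in\R$, so it poses no real difficulty. An alternative, even more elementary route would be to note directly that if all points $\sum_j d_j\beta^{-j}$ lie on a fixed line, then taking differences of two such points whose digit strings differ only in the first coordinate shows that every value $(d-d')/\beta$ with $d,d'\in D$ is parallel to $v$; picking $d\neq d'$ (possible precisely because $K_{\beta,D}$ is not a singleton) again forces $1/\beta\in\R$ and thus $\beta\in\Z$. Either formulation completes the contrapositive and establishes the lemma.
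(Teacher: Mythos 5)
Your main argument is correct and complete, and it takes a somewhat different route from the paper. You argue by contraposition: if $K_{\beta,D}\subseteq L$ and $K_{\beta,D}$ has two distinct points, then for each $j$ the injective map $\phi_j$ carries them to two distinct points lying simultaneously on $L$ and on the line $\phi_j(L)$, forcing $\phi_j(L)=L$; comparing directions gives $v/\beta\in\R^{\times}v$, hence $1/\beta\in\R$ and $\beta\in\Z$. The paper instead argues directly: it picks two maps $\phi_1,\phi_2$ with distinct fixed points, lets $u$ be the fixed point of $\phi_1$, and exhibits the three points $u$, $v=\phi_2(u)$, $w=\phi_1(v)$ of $K_{\beta,D}$, computing $v-u=(t_2-t_1)/\beta$ and $w-u=(t_2-t_1)/\beta^2$; these two chords differ by the non-real factor $1/\beta$, so the three points are not colinear. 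Both proofs rest on the same mechanism --- applying an IFS map multiplies chord directions by the non-real number $1/\beta$ --- but yours yields the slightly more conceptual statement that the ambient line would have to be invariant under every map of the IFS, while the paper's yields an explicit triple of non-colinear points. One caution about your closing aside: knowing only that $(d-d')/\beta$ is parallel to $v$ for some digits $d\neq d'$ does \emph{not} by itself force $1/\beta\in\R$, since it says nothing about how $v$ relates to $d-d'$; to make that ``more elementary route'' work you must compare two chords, e.g.\ $(d-d')/\beta$ and $(d-d')/\beta^2$ (from digit strings differing only in the first, respectively only in the second, coordinate), whose parallelism forces $\beta\in\R$ --- which is exactly the computation the paper performs.
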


\begin{proof}
Since  $K_{\beta,D}$ is not a singleton, there exist two  similitudes in the IFS $\Phi_{\beta,D}$ defined in \eqref{eqIFS} having distinct fixed points. Without loss of generality, we assume  $\phi_1$ and $\phi_2$ have distinct fixed points. Then $t_1\neq t_2$. Let $u$ be the fixed point of $\phi_1$. Set $v=\phi_2(u)$ and $w=\phi_1(v)$. Clearly, $u,v, w\in K_{\beta,D}$. Moreover,  notice that 
\[v-u=\phi_2(u)-\phi_1(u)=\frac{t_2-t_1}{\beta}, \quad w-u=\phi_1\circ\phi_2(u)-\phi_1\circ\phi_1(u)=\frac{t_2-t_1}{\beta^2}.\]
Since $\beta\in\Z[i]\setminus \Z$, it follows that $v-u$ and $w-u$ are not colinear. Hence $K_{\beta,D}$ is not contained in a line in $\C$.
\end{proof}

\section{Number-theoretical preliminaries}\label{S5}

This section is devoted to number-theoretical preparations that are needed to prove Theorem \ref{thmfinite}. The main result is Lemma \ref{lemorderlowerEs}, which plays a key role in the proof of Theorem \ref{thmfinite} and may be of some independent interest. 

We begin with some notation. 
Let $\alpha,\gamma\in \Z[i] $.
 We say that $\alpha, \gamma$ are {\em coprime} and write $\gcd(\alpha,\gamma)=1$, if the only  common divisors of $\alpha$ and $\gamma$ are the units in $\Z[i]$. In this case,  we define the {\em multiplicative order} of $\alpha$ modulo $\gamma$, denoted by ${\rm ord}(\alpha;\gamma)$, to be the smallest positive integer $k$ such that $\alpha^k\equiv 1\pmod \gamma$. This order is well-defined because for $\gcd(\alpha,\gamma)=1$, the residue class $\alpha\pmod \gamma$ is an element of the finite multiplicative group $(\Z[i]/(\gamma))^{\times}$, which ensures that such a $k$ exists. We adopt the convention that \({\rm ord}(\alpha; 1) = 1\). If $\gamma$ is a prime element and $\alpha$ is non-zero, we define the {\em $\gamma$-adic valuation} of $\alpha$, denoted by $\nu_{\gamma}(\alpha)$, to be the largest non-negative integer $m$ so that $\gamma^m\mid \alpha$. 
 
Lemma \ref{lemdivord} is a simple property of the multiplicative order in $\Z[i]$, whose proof is included for completeness. 
 
\begin{lem}\label{lemdivord}
Let $\alpha,\beta,\gamma\in\Z[i]$ and assume that $\beta\mid\gamma$ and  ${\rm gcd}(\alpha,\gamma)=1$. Then ${\rm ord}(\alpha;\beta)|{\rm ord}(\alpha;\gamma)$. 
\end{lem}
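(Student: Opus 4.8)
The plan is to reduce the divisibility of orders to the elementary fact that the set of exponents annihilating an invertible residue class forms an ideal of $\Z$. First I would observe that the hypotheses $\beta\mid\gamma$ and $\gcd(\alpha,\gamma)=1$ together force $\gcd(\alpha,\beta)=1$: any common divisor of $\alpha$ and $\beta$ divides $\gamma$ as well, hence is a unit in $\Z[i]$. Consequently ${\rm ord}(\alpha;\beta)$ is well-defined (with the convention ${\rm ord}(\alpha;1)=1$ covering the case where $\beta$ is a unit), so that both orders appearing in the statement make sense.

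Next I would set $k={\rm ord}(\alpha;\gamma)$, so that $\gamma\mid\alpha^k-1$ by definition of the order. Since $\beta\mid\gamma$, transitivity of divisibility in $\Z[i]$ yields $\beta\mid\alpha^k-1$, that is, $\alpha^k\equiv 1\pmod\beta$. Writing $m={\rm ord}(\alpha;\beta)$, it then suffices to prove that $m\mid k$.

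The final step is the standard division-algorithm argument, carried out on the rational-integer exponents. Dividing in $\Z$, write $k=qm+r$ with $0\leq r<m$. Because $\alpha$ is invertible modulo $\beta$, I may compute $\alpha^{r}=\alpha^{k}(\alpha^{m})^{-q}\equiv 1\pmod\beta$, using $\alpha^{k}\equiv 1$ and $\alpha^{m}\equiv 1$. By the minimality of $m$ as the least positive exponent with $\alpha^{m}\equiv 1\pmod\beta$, the constraint $0\leq r<m$ forces $r=0$, and hence $m\mid k$, as desired.

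There is no substantive obstacle here; this is a routine transfer of the classical order-divisibility lemma to $\Z[i]$. The only two points needing a word of care are the well-definedness of ${\rm ord}(\alpha;\beta)$ (secured by the coprimality transfer in the first step) and the bookkeeping that keeps $q$ and $r$ in $\Z$: the division algorithm is applied to the integer exponent $k$, not inside the Gaussian integers, so the residue-class computation remains entirely within the finite group $(\Z[i]/(\beta))^{\times}$.
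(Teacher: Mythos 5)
Your proof is correct and follows essentially the same route as the paper: derive $\beta\mid\alpha^{k}-1$ from $\gamma\mid\alpha^{k}-1$ and $\beta\mid\gamma$, then conclude ${\rm ord}(\alpha;\beta)\mid k$. The paper simply leaves implicit the two points you spell out (the coprimality transfer giving well-definedness of ${\rm ord}(\alpha;\beta)$, and the division-algorithm argument behind the final divisibility), so your write-up is a more detailed version of the same proof.
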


\begin{proof}
By definition,  $\gamma\mid (\alpha^{{\rm ord}(\alpha;\gamma)}-1)$. Since $\beta\mid\gamma$, we thus have $\beta\mid (\alpha^{{\rm ord}(\alpha;\gamma)}-1)$ and so $\alpha^{{\rm ord}(\alpha;\gamma)}\equiv 1\pmod \beta$. It follows that ${\rm ord}(\alpha;\beta)|{\rm ord}(\alpha;\gamma)$. 
\end{proof}

Recall that $N(\cdot)$ is the norm and $H(\cdot)$ is the denominator height on $\Z[i]$; see \eqref{eqnorm}-\eqref{defReducedNorm}. Since $\Z[i]$ is a UFD, every element $z\in\Z[i]\setminus\{0\}$ can be written uniquely (up to order and unit multiples) as 
\[z=u\beta_1^{n_1}\beta_2^{n_2}\cdots\beta_k^{n_k},\]
where $u$ is a unit, and $\beta_1,\ldots,\beta_k$ are non-associate prime elements.
 
The development in the rest of this section relies heavily on the following classification of prime elements in $\Z[i]$.

\begin{lem}\cite[Proposition 3.68]{Rotman02A}\label{LemClassify}
A Gaussian integer $\gamma\in\Z[i]$ is a prime element if and only if

(Type I) \( \gamma \) is an associate of a prime number in \( \mathbb{N} \) of the form \(4m + 3 \); or

(Type II) \( \gamma \) is an associate of \( 1 + i \); or

(Type III) \( N(\gamma)\) is a prime number in \( \mathbb{N} \) of the form \( 4m + 1 \).
\end{lem}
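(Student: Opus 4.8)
The plan is to use that $\Z[i]$ is a Euclidean domain under the norm $N$ defined in \eqref{eqnorm}, hence a unique factorization domain in which prime elements coincide with irreducible elements, and to exploit the multiplicativity $N(\alpha\beta)=N(\alpha)N(\beta)$ together with the fact that $\alpha\in\Z[i]$ is a unit exactly when $N(\alpha)=1$. The single most useful consequence is: if $N(\alpha)$ is a rational prime, then $\alpha$ is a prime element, since any factorization $\alpha=\beta\gamma$ forces $N(\beta)N(\gamma)=N(\alpha)$ to split the prime $N(\alpha)$, making one factor a unit. This immediately gives the ``if'' direction for Types II and III: $N(1+i)=2$ is prime, so $1+i$ and its associates are prime (Type II), and any $\gamma$ with $N(\gamma)$ a rational prime is prime (Type III). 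For Type I, I would show directly that a rational prime $p\equiv 3\pmod 4$ remains prime in $\Z[i]$: a proper factorization $p=\beta\gamma$ would give $N(\beta)=N(\gamma)=p$, so $p=a^2+b^2$ for some $a,b\in\Z$; but $a^2,b^2\in\{0,1\}\pmod 4$, whence $a^2+b^2\not\equiv 3\pmod 4$, a contradiction. Associates of such $p$ are then prime as well.

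For the ``only if'' direction I would start from an arbitrary prime element $\pi$ and locate the rational prime it lies over. Since $\pi\mid\pi\bar\pi=N(\pi)>1$ and $\pi$ is prime, $\pi$ divides some rational prime $p$ occurring in the integer factorization of $N(\pi)$. Taking norms in $\pi\mid p$ gives $N(\pi)\mid N(p)=p^2$, and as $N(\pi)>1$ this forces $N(\pi)\in\{p,p^2\}$. The classification then reduces entirely to analyzing these two possibilities in terms of $p\bmod 4$.

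In the case $N(\pi)=p$, writing $\pi=a+bi$ gives $a^2+b^2=p$, so $p$ is a sum of two squares; by the mod-$4$ obstruction above this forces $p=2$ or $p\equiv 1\pmod 4$. If $p=2$, then $N(\pi)=2$ and $\pi$ is one of $\pm1\pm i$, all associates of $1+i$ (Type II); if $p\equiv 1\pmod 4$, then $N(\pi)=p$ is a prime $\equiv 1\pmod 4$ (Type III). In the case $N(\pi)=p^2$, the relation $\pi\mid p$ with $N(\pi)=N(p)$ shows that $p=\pi\delta$ for a unit $\delta$, so $\pi$ is an associate of $p$; it remains to see that $p\equiv 3\pmod 4$. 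If instead $p=2$ or $p\equiv 1\pmod 4$, then $-1$ is a quadratic residue modulo $p$, so there is $m\in\Z$ with $p\mid m^2+1=(m+i)(m-i)$ while $p\nmid m\pm i$; this exhibits $p$ as reducible in $\Z[i]$, contradicting that $\pi$ (an associate of $p$) is prime. Hence $p\equiv 3\pmod 4$ and $\pi$ is of Type I. Since the three types are distinguished by the value of $N(\pi)$ (respectively $p^2$ with $p\equiv 3\pmod 4$, the value $2$, or a prime $\equiv 1\pmod 4$), they are mutually exclusive, so the classification is complete.

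The only genuinely non-elementary input is the statement that $-1$ is a quadratic residue modulo a prime $p$ exactly when $p=2$ or $p\equiv 1\pmod 4$ --- equivalently, Fermat's two-square theorem --- which I would quote from elementary number theory (via Euler's criterion or Wilson's theorem); this is the step where the congruence $p\equiv 1\pmod 4$ actually enters and produces the splitting underlying Type III. Everything else is bookkeeping with the multiplicative norm and the mod-$4$ values of sums of two squares, so I expect this quadratic-residue fact to be the main (and essentially only) obstacle.
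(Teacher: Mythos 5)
The paper offers no proof of this lemma at all---it is quoted directly from Rotman \cite[Proposition 3.68]{Rotman02A}---so there is no in-paper argument to compare against. Your proof is correct and complete, and it is essentially the standard textbook argument (the one given in the cited reference): pass to the rational prime $p$ lying under $\pi$ via $N(\pi)\mid p^2$, use the mod-$4$ obstruction for sums of two squares to handle $p\equiv 3\pmod 4$, and use that $-1$ is a quadratic residue modulo $p$ when $p=2$ or $p\equiv 1\pmod 4$ to show such $p$ split, which settles both directions.
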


The following is a simple property of the norm $N$ in $\Z[i]$. 

\begin{lem}\label{lemgcd}
Let $\alpha\in\Z[i]$ and $\gamma\in\Z\cup (i\Z)$. Then $\gcd(\alpha,\gamma)=1$ in $\Z[i]$ if and only if $\gcd(N(\alpha),N(\gamma))=1$ in $\Z$. 
\end{lem}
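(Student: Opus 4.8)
The plan is to prove the two implications separately, with essentially all the content concentrated in the forward direction, where the hypothesis $\gamma\in\Z\cup(i\Z)$ is crucial.

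For the reverse implication I would argue contrapositively. If $\gcd(\alpha,\gamma)\neq 1$ in $\Z[i]$, then $\alpha$ and $\gamma$ share a prime element $\pi$, and by multiplicativity of the norm $N(\pi)$ divides both $N(\alpha)$ and $N(\gamma)$. Since $N(\pi)\geq 2$ for every prime element $\pi$, this forces $\gcd(N(\alpha),N(\gamma))>1$. Note that this direction uses nothing about the special form of $\gamma$.

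For the forward implication I would again argue contrapositively. First reduce to $\gamma=n\in\Z$: since $i$ is a unit, replacing $\gamma=in$ by $n$ changes neither $\gcd(\alpha,\gamma)$ nor $N(\gamma)=n^2$. Now suppose $\gcd(N(\alpha),N(\gamma))>1$ and choose a rational prime $p$ dividing it. Because $N(\gamma)=n^2$, we get $p\mid n^2$, hence $p\mid n$, and therefore $p\mid\gamma$ in $\Z[i]$; this is exactly the step that fails for a general Gaussian integer (for instance $p=2$ divides $N(1+i)=2$ yet $2\nmid(1+i)$), and it is where the assumption $\gamma\in\Z\cup(i\Z)$ is used. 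It then remains to show that $p\mid N(\alpha)=\alpha\bar\alpha$ forces a common prime element of $\alpha$ and $p$; such a prime would divide both $\alpha$ and $\gamma$, contradicting their coprimality.

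The main obstacle, and the one genuine input, is this last step, which I would settle using the classification of Gaussian primes (Lemma \ref{LemClassify}) to factor $p$ and analyze three cases. If $p\equiv 3\pmod 4$, then $p$ is itself a prime element; from $p\mid\alpha\bar\alpha$ and primality $p$ divides $\alpha$ or $\bar\alpha$, and since $p$ is real, conjugation yields $p\mid\alpha$ in either case. If $p=2$, then writing $\alpha=a+bi$ the condition $2\mid N(\alpha)=a^2+b^2$ is equivalent to $a\equiv b\pmod 2$, which is precisely the condition $(1+i)\mid\alpha$, and $(1+i)\mid 2\mid\gamma$. If $p\equiv 1\pmod 4$, then $p=\pi\bar\pi$ with $\pi$ a non-associate prime element of norm $p$; from $\pi\bar\pi\mid\alpha\bar\alpha$ and primality of $\pi$, one of $\pi,\bar\pi$ divides $\alpha$, and that factor also divides $p$. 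In every case $\alpha$ and $p$ share a prime element, which completes the contradiction and hence the proof.
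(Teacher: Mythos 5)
Your proof is correct, and its skeleton matches the paper's: your reverse (``if'') direction is the same norm-of-a-common-prime argument the paper gives, and in the forward direction you make the same reduction to $\gamma=n\in\Z$ and isolate the same crucial step, namely that $p\mid N(\gamma)=n^2$ forces $p\mid\gamma$ (with a nice explicit counterexample showing why this fails for general $\gamma$). Where you genuinely differ is in how the common prime factor is then produced. The paper factors $\alpha=u\alpha_1^{k_1}\cdots\alpha_s^{k_s}$ into Gaussian primes, uses multiplicativity of the norm to find a prime factor $\alpha_1$ of $\alpha$ with $p\mid N(\alpha_1)$, and runs the Type I/II/III classification on $\alpha_1$ to conclude $\alpha_1\mid p\mid\gamma$. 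You instead factor the rational prime $p$ in $\Z[i]$ according to its splitting behavior ($p\equiv 3\pmod 4$ inert, $p=2$ ramified, $p\equiv 1\pmod 4$ split) and apply Euclid's lemma to $p\mid N(\alpha)=\alpha\bar{\alpha}$ to conclude that some Gaussian prime factor of $p$ divides $\alpha$. The two case analyses are dual: the paper descends from a prime factor of $\alpha$ to $p$, you ascend from a prime factor of $p$ to $\alpha$. Your route is slightly more economical in that it needs only the identity $N(\alpha)=\alpha\bar{\alpha}$ and primality in the UFD $\Z[i]$ rather than the full prime factorization of $\alpha$, and your $p=2$ case is a bare parity computation; the paper's version has the organizational advantage of invoking Lemma \ref{LemClassify} uniformly in all three cases. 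Both arguments are complete and rigorous.
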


\begin{proof}
The ``if" part actually holds for all $\alpha,\gamma\in\Z[i]$. To see this, let $\alpha,\gamma\in \Z[i]$ with $\gcd(N(\alpha),N(\gamma))=1$ in $\Z$. If $\alpha,\gamma$ are not coprime, then there exists a prime element $\eta\in \Z[i]$ such that $\eta\mid \alpha$ and $\eta \mid \gamma$. It follows that $N(\eta)\mid N(\alpha)$ and $N(\eta)\mid N(\gamma)$. Since $N(\eta)>1$ as $\eta$ is not a unit, this contradicts the assumption that  $\gcd(N(\alpha),N(\gamma))=1$ in $\Z$. 

To prove the ``only if" part, we focus on the case that $\gamma\in \Z$; the other case that $\gamma\in i\Z$ can be proved similarly. Let $\alpha\in\Z[i], \gamma\in \Z$ and assume that $\gcd(\alpha,\gamma)=1$ in $\Z[i]$. If $\gcd(N(\alpha),N(\gamma))>1$ in $\Z$, then we can find a prime number $p\in \N$ dividing both $N(\alpha)$ and $N(\gamma)$. Since $\gamma\in\Z$, we have $N(\gamma)=\gamma^2$ and so $p\mid \gamma$. On the other hand, write $\alpha=u\alpha_1^{k_1}\cdots\alpha_s^{k_s}$, where $u$ is a unit in $\Z[i]$, $k_1,\ldots,k_s\in\N$, and $\alpha_1,\ldots,\alpha_s\in \Z[i]$ are non-associate prime elements. Then since $N(\alpha)=N(\alpha_1)^{k_1}\cdots N(\alpha_s)^{k_s}$, $p\mid N(\alpha)$ and $p\in\N$ is a prime number, we can assume without loss of generality that $p\mid N(\alpha_1)$. Below we discuss the type of $\alpha_1$ to derive a contradiction. 

If $\alpha_1$ is a prime element of Type I, then $\alpha_1\in \Z\cup (i\Z)$ with $|\alpha_1|\in \N$ being a prime number. Since $p\mid N(\alpha_1)=|\alpha_1|^2$, we see that $p=|\alpha_1|$. Therefore, $\alpha_1$ is a common non-unit divisor of $\alpha$ and $\gamma$, a contradiction. If $\alpha_1$ is of Type II or Type III, then $N(\alpha_1)\in\N$ is prime and so $p=N(\alpha_1)$. Since $\alpha_1\mid N(\alpha_1)$ in $\Z[i]$, we again see that  $\alpha_1$ divides both $\alpha$ and $\gamma$, which again contradicts that $\gcd(\alpha,\gamma)=1$. This completes the proof of the lemma. 
\end{proof}

The following lemma is about properties of the denominator height of prime elements, which are needed in the proof of Lemma \ref{lemcompord}. 

\begin{lem}\label{LemExip}
Let  $\gamma\in \Z[i]$ be a prime element. Then  $H(\gamma)$ is a prime number in $\N$ and $\gamma\mid H(\gamma)$ in $\Z[i]$. Moreover,  $\nu_{\gamma}(H(\gamma))=1$ if $\gamma$ is of Type I or Type III and $\nu_{\gamma}(H(\gamma))=2$ if $\gamma$ is of Type II. 
\end{lem}

\begin{proof}
From the classification of the prime elements in $\Z[i]$ (see Lemma \ref{LemClassify}) and the definition of the denominator height $H$ (see \eqref{defReducedNorm}), we easily see that 
\[H(\gamma)=\begin{cases}
                      |\gamma|, & \text{ if }\gamma \text{ is of Type I},\\
                      2,       & \text{ if }\gamma \text{ is of Type II},\\
                      N(\gamma),& \text{ if }\gamma \text{ is of Type III}. 
                    \end{cases}\]
Then it is straightforward to see that in each case,  $H(\gamma)$ is a prime number in $\N$ and $\gamma\mid H(\gamma)$ in $\Z[i]$.  For the second statement, the conclusion clearly holds if $\gamma$ is of Type I. If $\gamma$ is of Type II, then it is easy to check that $H(\gamma)=2=\gamma^2u$ for some unit $u\in \Z[i]$.  For the case when $\gamma$ is of Type III,  we argue by contradiction. Suppose $H(\gamma)=\gamma^2\omega$ for some $\omega\in \Z[i]$. Write $\gamma=a+bi$, where $a,b\in\Z$. Then since $N(\gamma)=a^2+b^2$ is a prime number of the form $4m+1$, $a,b$ are both non-zero and $a\neq \pm b$. Moreover, since $H(\gamma)=N(\gamma)=\gamma\bar{\gamma}$, we have $\bar{\gamma}=\gamma\omega$ and so
\[\omega=\frac{\bar{\gamma}}{\gamma}=\frac{a^2-b^2}{a^2+b^2}-\frac{2ab}{a^2+b^2}i,\]
which is not in $\Z[i]$, a contradiction. Hence $\nu_{\gamma}(H(\gamma))=1$ when $\gamma$ is a prime element of Type III. This finishes the proof.
\end{proof}

 For $x\in\R$, let $\lfloor x \rfloor$ denote the largest integer less than or equal to $x$. In the following two lemmas, we establish properties of the denominator height for powers of prime elements. 

\begin{lem}\label{LemHalphaprime}
Let $\gamma\in \Z[i]$ be a prime element. Then for any $n\in\N$,
\[ H(\gamma^n)=
\begin{cases}
  |\gamma|^n, & \text{ if }\gamma \text{ is  of Type I},\\
  2^{n-\lfloor\frac{n}{2}\rfloor}, & \text{ if }\gamma \text{ is of Type II},\\
  N(\gamma)^n,& \text{ if }\gamma \text{ is of Type III}.
\end{cases}
\]
In particular, $H(\gamma^n)\leq 2\cdot 2^{\frac{n}{2}}$ for $n\in\N$ if $\gamma$ is of Type II.
\end{lem}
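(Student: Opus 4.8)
The plan is to compute $H(\gamma^n)$ directly from the definition \eqref{defReducedNorm} by treating each of the three types of prime elements separately, exactly as the statement is organized. Recall $H(\alpha) = N(\alpha)/\gcd(a,b)$ for $\alpha = a+bi$, and note that $N(\gamma^n) = N(\gamma)^n$ is already known. So in each case the only real task is to compute $\gcd$ of the real and imaginary parts of $\gamma^n$.

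For Type I, $\gamma$ is an associate of a rational prime $p \equiv 3 \pmod 4$, so up to a unit $\gamma \in \Z \cup (i\Z)$. Here I would invoke Lemma~\ref{LemHsubmul}(i): since $\gamma \in \Z\cup(i\Z)$, repeatedly applying $H(\alpha\delta) = |\alpha|H(\delta)$ (or noting $\gamma^n \in \Z\cup(i\Z)$ directly) gives $H(\gamma^n) = |\gamma|^n$. This is the quickest case. For Type III, $N(\gamma) = p$ is a rational prime $\equiv 1 \pmod 4$. The cleanest route is to show that the real and imaginary parts of $\gamma^n$ are coprime, so that $H(\gamma^n) = N(\gamma^n) = N(\gamma)^n$. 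I would argue that if a rational prime $q$ divided both parts of $\gamma^n = a_n + b_n i$, then $q \mid \gamma^n$ in $\Z[i]$; but the only primes dividing $\gamma^n$ are associates of $\gamma$ (which is not an associate of a rational integer, since $a,b$ are both nonzero for a Type III prime — this is established inside the proof of Lemma~\ref{LemExip}), forcing a contradiction. Equivalently, $\gcd(a_n,b_n)=1$ because $\gcd(a_n,b_n)^2 \mid N(\gamma^n) = p^n$ would force $p \mid \gamma^n$ and hence $p = \gamma\bar\gamma$ to ramify, contradicting $p$ splitting.

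The genuinely computational case, and the one I expect to be the main obstacle, is Type II, where $\gamma$ is an associate of $1+i$. Here $N(\gamma) = 2$, so $N(\gamma^n) = 2^n$, and the claim reduces to showing $\gcd(a_n, b_n) = 2^{\lfloor n/2\rfloor}$ where $\gamma^n = a_n + b_n i$. Since unit multiples do not change $|\gamma|$ or the gcd of the coordinates up to the sign/swap, I may assume $\gamma = 1+i$. Then $\gamma^2 = 2i$, so I would split into the parity of $n$: if $n = 2k$, then $(1+i)^n = (2i)^k = 2^k i^k$, which lies in $\Z\cup(i\Z)$ with a single nonzero coordinate of absolute value $2^k$, giving $\gcd = 2^k = 2^{n/2}$ and $H(\gamma^n) = 2^n/2^{n/2} = 2^{n/2} = 2^{n-\lfloor n/2\rfloor}$; if $n = 2k+1$, then $(1+i)^n = (1+i)(2i)^k = 2^k(1+i)i^k$, whose two coordinates are $\pm 2^k$ and $\pm 2^k$, giving $\gcd = 2^k = 2^{\lfloor n/2\rfloor}$ and $H(\gamma^n) = 2^n/2^{k} = 2^{n-\lfloor n/2\rfloor}$. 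This matches the stated formula in both parities.

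Finally, the ``in particular'' clause is an immediate consequence of the Type II formula: writing $m = \lfloor n/2\rfloor$, we have $H(\gamma^n) = 2^{n-m}$, and since $n - m \le n - (n-1)/2 = (n+1)/2 = \tfrac{n}{2} + \tfrac12$, we get $H(\gamma^n) = 2^{n-m} \le 2^{\frac{n}{2}+\frac12} = \sqrt{2}\cdot 2^{n/2} \le 2\cdot 2^{n/2}$, as required. I would present this bound in a single line after disposing of the three cases.
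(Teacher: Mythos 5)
Your proof is correct, and its skeleton matches the paper's: reduce everything to computing $\gcd$ of the coordinates of $\gamma^n$ (since $N(\gamma^n)=N(\gamma)^n$ is free), handle Type I via Lemma~\ref{LemHsubmul}(i), Type II by reducing to $\gamma=1+i$, and Type III by showing the coordinates of $\gamma^n$ are coprime. The differences are in the two computational cases. For Type II the paper writes $\gamma^n=a_n+b_ni$ with $a_n=2^{n/2}\cos(n\pi/4)$, $b_n=2^{n/2}\sin(n\pi/4)$ and reads off $\gcd(a_n,b_n)=2^{\lfloor n/2\rfloor}$; your parity split via $(1+i)^2=2i$ is the same computation in more elementary clothing. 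The genuine divergence is Type III: the paper supposes a rational prime $q$ divides both coordinates, so $q\mid\gamma^n$ in $\Z[i]$, and then runs three cases ($q\equiv 3\pmod 4$, $q=2$, $q\equiv 1\pmod 4$), invoking Fermat's Two-Squares Theorem in the last case to manufacture a prime element $s+ti$ dividing $\gamma$. You replace this case analysis by a single unique-factorization argument: $\gcd(A_n,B_n)^2\mid N(\gamma^n)=p^n$ forces the gcd to be a power of $p$, and $p\mid\gamma^n$ would make $\bar{\gamma}$ (a prime element dividing $p$) an associate of $\gamma$, contradicting the facts $a,b\neq 0$, $a\neq\pm b$ that the paper itself establishes inside the proof of Lemma~\ref{LemExip}. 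This buys independence from the Two-Squares Theorem and is arguably cleaner; the paper's route is more pedestrian but self-contained at the level of individual rational primes. One caution: your first phrasing of the Type III contradiction (``the only primes dividing $\gamma^n$ are associates of $\gamma$, which is not an associate of a rational integer'') is conclusive only when $q$ is itself a prime element of $\Z[i]$, i.e.\ $q\equiv 3\pmod 4$; when $q=2$ or $q\equiv 1\pmod 4$, $q$ is not prime in $\Z[i]$ and one must pass to a prime element factor of $q$ first. Your ``equivalently'' sentence does close this gap, so that version, not the first one, should be taken as the actual proof.
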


\begin{proof}
If $\gamma$ is a Type I, then $\gamma=up$ for some unit $u\in\Z[i]$ and a prime number $p\in\N$ of the form $4m+3$. Hence for $n\in\N$, we have
\[H(\gamma^n)=H(u^np^n)=p^n=|\gamma|^n,\]
where the second equality holds by Lemma \ref{LemHsubmul}(i).

For the case that $\gamma$ is of Type II, since $H$ takes the same value at a Gaussian integer and its associates (cf. Lemma~\ref{LemHsubmul}(i)), it suffices to prove the lemma for $\gamma=1+i$. A simple calculation gives that $\gamma^n=a_n+b_ni$, where
\[a_n=2^{n/2}\cos(n\pi/4),\quad b_n=2^{n/2}\sin(n\pi/4).\]
From this we easily see that $\gcd(a_n,b_n)=2^{\lfloor\frac{n}{2}\rfloor}$. Since $N(\gamma^n)=N(\gamma)^n=2^n$, the conclusion follows.

Finally, we consider the case that $\gamma$ is of Type III. Then $N(\gamma):=p\in \N$ is a prime number of the form $4m+1$. For $n\in\N$, write $\gamma^n=A_n+B_ni$, where $A_n,B_n\in \Z$. Since 
\[H(\gamma^n)=\frac{N(\gamma^n)}{\gcd(A_n,B_n)}=\frac{N(\gamma)^n}{\gcd(A_n,B_n)},\]
it suffices to prove that $\gcd(A_n,B_n)=1$. We argue by contradiction. Suppose that $q\in\N$ is a prime number dividing both $A_n$ and $B_n$. Then certainly $q\mid \gamma^n$ in $\Z[i]$. We consider three  scenarios separately as follows. 

{\em Case 1}. $q\equiv 3\pmod 4$. In this case, $q$ is a prime element of Type I. Hence $q\mid \gamma^n$ implies that $q\mid \gamma$, and so $q^2\mid N(\gamma)=p$. A contradiction arises since $p\in \N$ is a prime number. 

{\em Case 2}. $q=2$. Since $2=(1+i)(1-i)$ and $1+i$ is a prime element in $\Z[i]$, we see from $2\mid \gamma^n$ that $1+i\mid \gamma$. This yields that $N(1+i)\mid N(\gamma)$, i.e., $2\mid p$, which is again a contradiction since $p$ is odd.

{\em Case 3}. $q\equiv 1\pmod 4$. By Fermat’s Two-Squares Theorem (cf. \cite[Theorem 3.66]{Rotman02A}), there exist $s,t\in\N$  such that $q=s^2+t^2$. Then by Lemma \ref{LemClassify},  $s+ti$ is a prime element of Type III.  Since $q=(s+ti)(s-ti)$ and $q\mid \gamma^n$, we see that $s+ti$ divides $\gamma $. This again leads to a contradiction since $\gamma$ is a prime element.  
\end{proof}

The following lemma deals with the denominator height of products of powers of conjugate Type III prime elements.

\begin{lem}\label{LemIIIpower}
Let $\alpha\in \Z[i]$ be a prime element of Type III and $\gamma$ be an associate of $\bar{\alpha}$. Then for any $m,n\in\N$, 
\[H(\alpha^n\gamma^m)=H(\alpha)^{\max\{m,n\}}.\]
\end{lem}

\begin{proof}
Without loss of generality, assume $\gamma=\bar{\alpha}$ and $n\geq m$; the other cases can be proved similarly. Then we have 
\[\alpha^n\gamma^m=\alpha^{n-m}(\alpha\bar{\alpha})^{m}=N(\alpha)^{m}\alpha^{n-m}.\]
Therefore, by Lemma~\ref{LemHsubmul}(i) and Lemma \ref{LemHalphaprime},
\[H(\alpha^n\gamma^m)=N(\alpha)^{m}H(\alpha^{n-m})=N(\alpha)^{m}N(\alpha)^{n-m}=N(\alpha)^n=H(\alpha)^n.\]
\end{proof}

In the following, we prove an order-lifting lemma for the multiplicative order in $\Z[i]$, which can be viewed as  a complex analogue of \cite[Lemma 3]{Bloshchitsyn15}.

\begin{lem}\label{lemcompord}
Let $\alpha,\gamma\in\Z[i]$ with $\gamma$ a prime element and $\gamma\nmid \alpha$. If  $\gamma$ is of Type I or Type III, set $p=H(\gamma)$, $d={\rm ord}(\alpha;\gamma)$ and $m=\nu_{\gamma}(\alpha^d-1)$. Then we have 
\begin{equation}\label{eqordalbeta2}
{\rm ord}(\alpha;\gamma) ={\rm ord}(\alpha;\gamma^2) = \cdots ={\rm ord}(\alpha;\gamma^m)= d,
\end{equation}
and
\begin{equation}\label{eqordmpj}
 {\rm ord}(\alpha;\gamma^{m+j})=p^j d, \quad \nu_{\gamma}(\alpha^{p^jd}-1)=m+j,\quad \forall j\in\N.
\end{equation}
If $\gamma$ is of Type II,  let $u\in\Z[i]$ be the unit such that $2=u\gamma^2$. Set $m=\nu_{\gamma}(\alpha-1)$ and $t_j=\nu_{\gamma}(\alpha^{2^j}-1)$ for $j\in\N$. Then the following hold.

{\rm (i)} If $m=1$, set $\tau_1=\nu_{\gamma}(\eta^2+u\gamma\eta+u)$, where $\eta=(\alpha-1)\gamma^{-1}$. Then
\begin{equation}\label{equtau1}
t_1=2,\quad t_j=\tau_1+2j \quad (\forall j\geq 2), \quad {\rm ord}(\alpha;\gamma^{t_j})=2^j\quad (\forall j\in\N).
\end{equation}

{\rm (ii)} If $m=2$, set $\tau_2=\nu_{\gamma}(\eta+u)$, where $\eta=(\alpha-1)\gamma^{-2}$. Then 
\begin{equation*}
t_j=\tau_2+2j+2, \quad {\rm ord}(\alpha; \gamma^{t_j})=2^j,\quad \forall j\in\N.
\end{equation*}

{\rm (iii)} If $m\geq 3$, then $t_j=m+2j$ and  ${\rm ord}(\alpha;\gamma^{t_j})=2^j$ for $j\in \N$.
\end{lem}

\begin{proof}
First consider the case that $\gamma$ is of either Type I or Type III. 
Since ${\rm ord}(\alpha;\gamma)=d$ and $\nu_{\gamma}(\alpha^d-1)=m$, we have for $1\leq k\leq m$, $\alpha^d\equiv1\pmod{\gamma^k}$ and so ${\rm ord}(\alpha; \gamma^k)\mid d$. On the other hand, by Lemma \ref{lemdivord} we have $d\mid{\rm ord}(\alpha; \gamma^k)$. Hence ${\rm ord}(\alpha; \gamma^k)=d$ for $1\leq k\leq m$. This proves \eqref{eqordalbeta2}. Below we prove \eqref{eqordmpj} by induction on $j$.

Since $\nu_{\gamma}(\alpha^d-1)=m$, we can write $\alpha^d=1+\omega\gamma^m$, where $\omega\in\Z[i]$, $\gamma\nmid \omega$. Then by the binomial formula, 
\begin{equation}\label{eqBinomial}
\alpha^{pd}=(1+\omega\gamma^m)^{p}=1+p\omega\gamma^m+\binom{p}{2}(\omega\gamma^m)^2+\cdots+\binom{p}{p}(\omega\gamma^m)^p.
\end{equation}
By Lemma \ref{LemExip}, $\gamma\mid p$ and $\nu_{\gamma}(p)=1$. It follows that $\alpha^{pd}\equiv1\pmod{\gamma^{m+1}}$ and $\nu_{\gamma}(\alpha^{dp}-1)=m+1$. Hence ${\rm ord}(\alpha;\gamma^{m+1})\mid pd$. On the other hand, by Lemma \ref{lemdivord}, $d\mid {\rm ord}(\alpha;\gamma^{m+1})$. Moreover, $\nu_{\gamma}(\alpha^d-1)=m$ implies that $ {\rm ord}(\alpha;\gamma^{m+1})\neq d$. Hence $ {\rm ord}(\alpha;\gamma^{m+1})=pd$.  It then follows from \eqref{eqBinomial} that $\nu_{\gamma}(\alpha^{pd}-1)=m+1$. This proves \eqref{eqordmpj} for $j=1$. Suppose that \eqref{eqordmpj} holds for some $j\geq 1$. We are going to prove that it also holds for $j+1$. To this end, note that  by the induction hypothesis, we can write $\alpha^{p^jd}=1+\xi\gamma^{m+j}$, where $\xi\in\Z[i]$ and $\gamma\nmid \xi$. Then again by the binomial formula, we have
\[\alpha^{p^{j+1}d}=(1+\xi\gamma^{m+j})^p=1+p\xi\gamma^{m+j}+\binom{p}{2}(\xi\gamma^{m+j})^2+\cdots+\binom{p}{p}(\xi\gamma^{m+j})^p.\]
Again, since $\gamma\mid p$ and $\nu_{\gamma}(p)=1$, we see that $\nu_{\gamma}(\alpha^{p^{j+1}d}-1)=m+j+1$. Hence ${\rm ord}(\alpha; \gamma^{m+j+1})\mid p^{j+1}d$. It then follows from Lemma \ref{lemdivord} and the  induction hypothesis  that ${\rm ord}(\alpha; \gamma^{m+j+1})=p^{j+1}d$. Therefore, \eqref{eqordmpj} holds for $j+1$. This proves \eqref{eqordmpj}.

Next, we consider the case that $\gamma$ is of Type II. Since $\Z[i]$ is a Euclidean domain with the norm $N$, $\gamma\nmid\alpha$ and $N(\gamma)=2$, we easily see from the division algorithm that $\alpha\equiv1\pmod\gamma$. Hence $m=\nu_{\gamma}(\alpha-1)$ is well-defined.  Below we only present the proof for the case that $m=1$; the other two cases can be verified similarly. 

Since $m=1$, we have $\alpha=1+\gamma\eta$ with $\gamma\nmid \eta$. Then 
\[\alpha^2=(1+\gamma\eta)^2=1+2\gamma\eta+\gamma^2\eta^2=1+\gamma^2\eta(u\gamma+\eta).\]
Since $\gamma\nmid\eta$ and $\gamma\nmid (u\gamma+\eta)$, it follows that $\nu_{\gamma}(\alpha^2-1)=2$ and ${\rm ord}(\alpha;\gamma^2)\mid 2$. Since $m=1$, we have ${\rm ord}(\alpha;\gamma^2)=2$. For $\alpha^{2^2}$ we have 
\begin{align*}
\alpha^{2^2}&=(1+\gamma^2\eta(u\gamma+\eta))^2\\
&=1+2\gamma^2\eta(u\gamma+\eta)+\gamma^4\eta^2(u\gamma+\eta)^2. \\
&=1+\gamma^4\eta(u\gamma+\eta)(\eta^2+u\gamma\eta+u).
\end{align*}
Since $\tau_1=\nu_{\gamma}(\eta^2+u\gamma\eta+u)$, it follows that $\nu_{\gamma}(\alpha^{2^2}-1)=\tau_1+4$. This proves that the statement for $t_j$ in \eqref{equtau1} holds for $j=2$. Then by induction on $j$, using the relation $2=u\gamma^2$ and arguing as in the Type I/III case, we obtain the formulas for $t_j$ and ${\rm ord}(\alpha;\gamma^{t_j})$ for all $j\geq 1$ in \eqref{equtau1}. This finishes the proof of the lemma. 
\end{proof}

The following  is a consequence of Lemma \ref{lemcompord}.

\begin{lem}\label{Lemordbase2}
Let \( \alpha \in \mathbb{Z}[i]\) and let \(\gamma\in\Z[i]\) be a prime element of Type II with \(\gamma\nmid \alpha\). Then for any $n\in\N$, ${\rm ord}(\alpha;\gamma^n)$ is an integer power of $2$ and we have
\[{\rm ord}(\alpha; \gamma^n) \geq C_22^{\frac{n}{2}},
\]
where $C_2>0$ is a constant independent of $n$. 
\end{lem}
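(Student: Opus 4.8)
The plan is to apply Lemma \ref{lemcompord} in the Type II case and then track how the order grows. First I would invoke the Type II part of Lemma \ref{lemcompord}, equation \eqref{eqordmp2j}: since $\gamma$ is a Type II prime with $\gamma \nmid \alpha$, we have $d = {\rm ord}(\alpha;\gamma) = 1$, and setting $m = \nu_\gamma(\alpha^d - 1) = \nu_\gamma(\alpha - 1)$, the lemma gives ${\rm ord}(\alpha;\gamma^{m+2j}) = 2^j$ for all $j \in \N$. This immediately shows that the order is a power of $2$ along the arithmetic progression of exponents $m, m+2, m+4, \ldots$. Combined with \eqref{eqordalbeta2}, which gives ${\rm ord}(\alpha;\gamma^k) = d = 1 = 2^0$ for $1 \le k \le m$, we obtain that ${\rm ord}(\alpha;\gamma^n)$ is a power of $2$ for every $n$ in the exponents covered; the intermediate exponents (e.g. $n = m + 2j + 1$) are handled by monotonicity from Lemma \ref{lemdivord}, since ${\rm ord}(\alpha;\gamma^n)$ divides ${\rm ord}(\alpha;\gamma^{n+1})$, squeezing the value between two equal consecutive powers of $2$ in the even sublattice.

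Next I would extract the lower bound. From \eqref{eqordmp2j}, whenever $n = m + 2j$ we have ${\rm ord}(\alpha;\gamma^n) = 2^j = 2^{(n-m)/2}$. For a general $n \ge m$, write $n = m + 2j + r$ with $r \in \{0,1\}$; then by the divisibility/monotonicity property of Lemma \ref{lemdivord}, ${\rm ord}(\alpha;\gamma^n) \ge {\rm ord}(\alpha;\gamma^{m+2j}) = 2^j \ge 2^{(n-m-1)/2} = 2^{-(m+1)/2} \cdot 2^{n/2}$. For the remaining finitely many $n < m$ the order is $1$, which is bounded below by a constant multiple of $2^{n/2}$ since $n$ ranges over a finite set. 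Taking $C_2 = 2^{-(m+1)/2}$ (or the minimum of this and the constants needed for the small cases) yields ${\rm ord}(\alpha;\gamma^n) \ge C_2 \, 2^{n/2}$ for all $n \in \N$, with $C_2$ depending only on $\alpha$ and $\gamma$ (through $m$) and not on $n$.

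The main technical point to be careful about is the dependence of the constant $C_2$ on $m = \nu_\gamma(\alpha-1)$, and ensuring the bound is uniform across all $n$ including the small exponents where the order equals $1$. Since $m$ is a fixed quantity determined by $\alpha$ and $\gamma$ (which are fixed in the statement), absorbing the factor $2^{-(m+1)/2}$ into $C_2$ is legitimate, and the finitely many exceptional small values of $n$ only affect $C_2$ by a bounded factor. I expect no serious obstacle here, as the statement is essentially a clean corollary of \eqref{eqordmp2j}; the only care needed is the bookkeeping to pass from the even-exponent subsequence, where the order is computed exactly, to all exponents via monotonicity, and to confirm that the resulting constant is genuinely independent of $n$.
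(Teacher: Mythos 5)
Your proposal is correct and follows essentially the same route as the paper's proof: both invoke the Type II case of Lemma \ref{lemcompord} (with $d=1$ and $m=\nu_\gamma(\alpha-1)$), sandwich ${\rm ord}(\alpha;\gamma^n)$ between ${\rm ord}(\alpha;\gamma^{m+2j})=2^j$ and ${\rm ord}(\alpha;\gamma^{m+2(j+1)})=2^{j+1}$ via the divisibility in Lemma \ref{lemdivord} to get both the power-of-$2$ claim and the bound ${\rm ord}(\alpha;\gamma^n)\geq 2^j \gg 2^{n/2}$, and absorb the dependence on $m$ (and the finitely many $n<m$, where the order is $1$) into the constant $C_2$.
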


\begin{proof}
We have seen in the proof of Lemma \ref{lemcompord} that $\alpha\equiv1\pmod\gamma$. 
Let $m=\nu_{\gamma}(\alpha-1)$. For simplicity, we only consider the case that $m\geq 3$; the other two cases $m=1$ and $m=2$ can be proved similarly with the aid of Lemma \ref{lemcompord} (with possibly different constants $C_2$). For  $n\geq m$, let $j\geq 0$ be such that 
\begin{equation*}\label{eqmp2jj1}
m+2j\leq n<m+2(j+1).
\end{equation*}
Then by Lemma \ref{lemdivord} and Lemma \ref{lemcompord}(iii), ${\rm ord}(\alpha;\gamma^n)$ is an integer power of $2$ and 
\[2^j={\rm ord}(\alpha;\gamma^{m+2j})\leq {\rm ord}(\alpha;\gamma^{n})\leq {\rm ord}(\alpha;\gamma^{m+2(j+1)})= 2^{j+1}.\]
Therefore,
\[{\rm ord}(\alpha;\gamma^{n})\geq 2^j\geq 2^{\frac{n-m}{2}-1}.\]
This inequality clearly also holds when $n<m$. Hence the lemma follows by taking $C_2=2^{-(\frac{m}{2}+1)}$.
\end{proof}

Now we are ready to prove the following result on the multiplicative order in $\Z[i]$, which plays a key role in the proof of Theorem \ref{thmfinite}; it may also be of independent interest. Lemma \ref{lemorderlowerEs} can be viewed as an extension of \cite[Lemma 2.5]{KLW25} to the complex case.

\begin{lem}\label{lemorderlowerEs}
Let \( \alpha \in \mathbb{Z}[i]\). Let \(\gamma, \eta_1,\xi_1,\ldots, \eta_q,\xi_q,\beta_1, \ldots, \beta_k\in\Z[i]\) be non-associate prime elements not dividing $\alpha$, where $\gamma$ is of Type II, $H(\eta_i)=H(\xi_i)$ for $1\leq i\leq q$, and $H(\beta_1),\ldots, H(\beta_k)$ are distinct. Then there exists a constant \( C_3 > 0 \) such that for any \( (h,r_1, s_1,\ldots, r_q,s_q,n_1, \ldots, n_k) \in \mathbb{N}_0^{k+2q+1}\), we have
\[
{\rm ord}(\alpha; \gamma^h\eta_1^{r_1}\xi_1^{s_1}\cdots\eta_q^{r_q}\xi_q^{s_q}\beta_1^{n_1}\cdots \beta_k^{n_k}) \geq C_32^{h/2}H(\eta_1)^{e_1}\cdots H(\eta_q)^{e_q}H(\beta_1)^{n_1}\cdots H(\beta_k)^{n_k},
\]
where $e_i=\max\{r_i,s_i\}$ for $1\leq i\leq q$.
\end{lem}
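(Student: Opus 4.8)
The plan is to prove the lower bound by reducing the multiplicative order modulo a composite modulus to the multiplicative orders modulo the individual prime-power factors, and then to combine these via the least common multiple. The key elementary fact is that if $M = M_1 M_2 \cdots M_r$ is a factorization of $M$ into pairwise coprime factors and $\gcd(\alpha, M) = 1$, then by the Chinese Remainder Theorem the ring $\Z[i]/(M)$ splits as a product $\prod_j \Z[i]/(M_j)$, so that $\alpha^k \equiv 1 \pmod{M}$ if and only if $\alpha^k \equiv 1 \pmod{M_j}$ for every $j$; consequently
\[
{\rm ord}(\alpha; M) = \lcm\bigl({\rm ord}(\alpha; M_1), \ldots, {\rm ord}(\alpha; M_r)\bigr).
\]
First I would apply this with the factorization into $\gamma^h$, the paired factors $\eta_i^{r_i}\xi_i^{s_i}$ (grouped together precisely because $\eta_i$ and $\xi_i$ are conjugate Type III primes with equal height), and the single factors $\beta_j^{n_j}$; these are pairwise coprime since all the listed primes are non-associate, and each is coprime to $\alpha$ by hypothesis.

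Next I would estimate each individual order from below in terms of the corresponding power of the denominator height. For the Type II factor, Lemma \ref{Lemordbase2} gives directly ${\rm ord}(\alpha; \gamma^h) \geq C_2 2^{h/2}$. For the single Type I or Type III factors $\beta_j$, Lemma \ref{lemcompord} (specifically \eqref{eqordmpj}) shows that ${\rm ord}(\alpha; \beta_j^{n_j})$ grows like $p_j^{n_j}$ with $p_j = H(\beta_j)$; combined with Lemma \ref{LemExip}, which identifies $H(\beta_j)$ as the relevant prime, a short computation analogous to Lemma \ref{Lemordbase2} yields a bound ${\rm ord}(\alpha; \beta_j^{n_j}) \gg H(\beta_j)^{n_j}$. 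The most delicate step is the paired factor $\eta_i^{r_i}\xi_i^{s_i}$: since $\eta_i$ and $\xi_i$ are non-associate conjugate primes they must be treated as two coprime moduli, so ${\rm ord}(\alpha; \eta_i^{r_i}\xi_i^{s_i}) = \lcm({\rm ord}(\alpha;\eta_i^{r_i}), {\rm ord}(\alpha;\xi_i^{s_i}))$, and I would bound this lcm from below by the larger of the two orders, obtaining $\gg H(\eta_i)^{\max\{r_i,s_i\}} = H(\eta_i)^{e_i}$, where Lemma \ref{LemIIIpower} guarantees that the height bookkeeping is consistent with $H(\eta_i) = H(\xi_i)$.

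To pass from these individual bounds to the product bound in the statement, I would use that for a collection of positive integers, $\lcm(x_1, \ldots, x_r)$ is bounded below whenever the $x_j$ are built from \emph{distinct} rational primes, since then the lcm equals the product. This is exactly why the hypothesis requires $H(\beta_1), \ldots, H(\beta_k)$ to be distinct (and $H(\gamma) = 2$ to be separated from the odd Type I/III heights): the prime $H(\gamma) = 2$, the primes $H(\eta_i)$, and the primes $H(\beta_j)$ are pairwise distinct rational primes, so the orders modulo the different factors involve disjoint sets of prime divisors, forcing their lcm to be at least the product of the individual lower bounds. Gathering the estimates then gives
\[
{\rm ord}(\alpha; \gamma^h \textstyle\prod_i \eta_i^{r_i}\xi_i^{s_i} \prod_j \beta_j^{n_j}) \gg 2^{h/2} \prod_{i=1}^q H(\eta_i)^{e_i} \prod_{j=1}^k H(\beta_j)^{n_j},
\]
which is the desired inequality with a suitable constant $C_3$ absorbing the constants from Lemmas \ref{Lemordbase2} and \ref{lemcompord}.

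The main obstacle I anticipate is verifying that the prime divisors appearing in the various individual orders are genuinely disjoint, so that the lcm really does equal (or at least dominates) the product rather than collapsing. The orders ${\rm ord}(\alpha;\beta_j^{n_j})$ produced by Lemma \ref{lemcompord} have the shape $p_j^{n_j - O(1)} d_j$ where $d_j = {\rm ord}(\alpha;\beta_j)$ divides $N(\beta_j) - 1$ or similar, so the $d_j$ factors may share small primes with one another and with the $2^{h/2}$ term; the clean separation holds only for the \emph{dominant} prime-power part $p_j^{n_j}$. I would therefore be careful to extract the lower bound from the $\gamma$-adic (respectively $\beta_j$-adic, $\eta_i$-adic) part of each order, where the relevant rational primes $H(\gamma), H(\eta_i), H(\beta_j)$ are distinct by hypothesis, and not rely on the full orders being coprime. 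Making this extraction rigorous — isolating the contribution of $H(\beta_j)^{n_j}$ inside the lcm while discarding the possibly-overlapping low-order factors — is the technical heart of the argument.
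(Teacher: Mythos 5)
Your proposal is correct and follows essentially the same route as the paper: both reduce to the prime-power factors (your CRT/lcm step plays the role of the paper's Lemma \ref{lemdivord}), both use Lemma \ref{lemcompord} and Lemma \ref{Lemordbase2} for the individual orders, and both then exploit that $2, H(\eta_1),\ldots,H(\eta_q),H(\beta_1),\ldots,H(\beta_k)$ are distinct rational primes so that the relevant prime-power parts multiply. The ``extraction'' issue you flag as the technical heart is handled in the paper exactly as you suggest: one works with the $H(\tau)$-adic parts of the orders, and the possibly overlapping low-order factors are absorbed into the constant via $Q=\prod_{\tau}H(\tau)^{m_{\tau}}$, yielding $C_3=C_2Q^{-1}$.
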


\begin{proof}
First, note that the assumption that the prime elements in Lemma \ref{lemorderlowerEs} are non-associate implies that 
\begin{equation}\label{EqDisprime}
2,\, H(\eta_1),\, \ldots \, H(\eta_q), \, H(\beta_1),\,\ldots, \, H(\beta_k)
\end{equation}
are distinct prime numbers. Moreover, since $H(\eta_i)=H(\xi_i)$ for $1\leq i\leq q$, we see that $\eta_i$ and $\xi_i$ are both of Type III. It then follows from the uniqueness part of Fermat's Two-Squares Theorem (cf. \cite[Theorem 3.66]{Rotman02A}) that $\eta_i$ is an associate of $\bar{\xi_i}$, the conjugate of $\xi_i$.

For convenience, write $\mathcal{B}=\{\eta_1,\xi_1,\ldots, \eta_q,\xi_q,\beta_1, \ldots, \beta_k\}$. Then by Lemma \ref{lemcompord}, for each $\tau\in\mathcal{B}$, there exist $m_{\tau},d_{\tau}\in \N$ such that 
 $$
{\rm ord}(\alpha; \tau) ={\rm ord}(\alpha;\tau^2) = \cdots ={\rm ord}(\alpha;\tau^{m_\tau})=d_\tau$$ and ${\rm ord}(\alpha;\tau^{m_{\tau}+j})=H(\tau)^jd_{\tau}$ for all $j\in\N.$
Hence we have for each $\tau\in\mathcal{B}$,
\begin{equation}\label{eqHbetambeta}
\quad H(\tau)^n \mid H(\tau)^{m_{\tau}} \cdot {\rm ord}(\alpha;\tau^n), \quad \forall n \in \mathbb{N}.
\end{equation}
Moreover, by Lemma \ref{Lemordbase2} there exists $C_2>0$ such that 
\begin{equation}\label{eqgamman}
{\rm ord}(\alpha;\gamma^n)\text{ is an integer power of } 2 \text{ and } {\rm ord}(\alpha;\gamma^n)\geq C_22^{\frac{n}{2}}, \quad \forall n\in\N.
\end{equation} 

Below we prove the lemma for all \( (h,r_1, s_1,\ldots, r_q,s_q,n_1, \ldots, n_k) \in \mathbb{N}^{k+2q+1}\); the other case that one of $h,r_1,s_1,\ldots, r_q,s_q,n_1,\ldots, n_k$ is $0$ can be proved similarly. Fix \( (h,r_1, s_1,\ldots, r_q,s_q,n_1, \ldots, n_k) \in \mathbb{N}^{k+2q+1}\). Write $$M={\rm ord}(\alpha; \gamma^h\eta_1^{r_1}\xi_1^{s_1}\cdots\eta_q^{r_q}\xi_q^{s_q}\beta_1^{n_1}\cdots \beta_k^{n_k}), \quad Q=\prod_{\tau\in\mathcal{B}}H(\tau)^{m_{\tau}}.$$
Then by \eqref{EqDisprime}-\eqref{eqHbetambeta}, the assumption that $H(\eta_i)=H(\xi_i)$ for $1\leq i\leq q$,  Lemmas~ \ref{lemdivord} and \ref{Lemordbase2}, we see that 
\[{\rm ord}(\alpha;\gamma^h)H(\eta_1)^{e_1}\cdots H(\eta_q)^{e_q}H(\beta_1)^{n_1}\cdots H(\beta_k)^{n_k}\mid QM,\]
where $e_i=\max\{r_i,s_i\}$ for $1\leq i\leq q$. It then follows that 
\begin{align*}
  M&\geq Q^{-1}{\rm ord}(\alpha;\gamma^h)H(\eta_1)^{e_1}\cdots H(\eta_q)^{e_q}H(\beta_1)^{n_1}\cdots H(\beta_k)^{n_k}\\
  &\geq Q^{-1}C_22^{h/2}H(\eta_1)^{e_1}\cdots H(\eta_q)^{e_q}H(\beta_1)^{n_1}\cdots H(\beta_k)^{n_k}.  \quad (\text{by }\eqref{eqgamman})
\end{align*}
Letting $C_3=Q^{-1}C_2$, we complete the proof.
\end{proof}

\section{Proof of Theorem \ref{thmfinite}}\label{S6}

Now we are ready to give the proof of Theorem~\ref{thmfinite}. The core of the proof is the application of Lemmas~\ref{lemperiod}-\ref{lemorderlowerEs}, together with an argument from \cite{KLW25}.

\begin{proof}[Proof of Theorem~\ref{thmfinite}]  We will prove the theorem when $\mathcal{S}$ consists of non-associate prime elements in $\Z[i]$ not dividing $\beta$ and is of the form 
\begin{equation}\label{EqSetS}
\mathcal{S}=\{\gamma,\eta_1,\xi_1,\ldots, \eta_q,\xi_q, \beta_1,\ldots,\beta_k\},
\end{equation}
where $\gamma$ is of Type II, $H(\eta_i)=H(\xi_i)$ for $1\leq i\leq q$,  and $H(\beta_1),\ldots, H(\beta_k)$ are distinct. For each of the other cases, we only need to replace $\mathcal{S}$ by a suitable subset of \eqref{EqSetS} and it will be apparent that a slight modification of the proof below yields the conclusion. Notice that since the prime elements in $\mathcal{S}$ are non-associate and $H(\eta_i)=H(\xi_i)$ for $1\leq i\leq q$, we infer from the classification of the prime elements in $\Z[i]$ (cf. Lemma \ref{LemClassify}) that for each $1\leq i\leq q$, $\eta_i$ and $\xi_i$ are both of Type III. Furthermore, by this and the uniqueness part of Fermat's Two-Square Theorem (cf. \cite[Theorem 3.66]{Rotman02A}), $\eta_i$ is an associate of the conjugate of $\xi_i$. 

Let $G_{\mathcal{S}}$ be the set of all Gaussian rational numbers whose denominators have only prime factors from $\mathcal{S}$. Our target is to show that $\#(G_{\mathcal{S}}\cap K_{\beta,D})<\infty$. We assume that $K_{\beta,D}$ is not a singleton; otherwise there is nothing to prove.  

We first show that if $\beta\in \Z$, then the theorem can be deduced from a known result on $\R$. Assume that $\beta\in\Z$. Then the IFS $\Phi_{\beta,D}^2:=\{\phi_i\circ\phi_j: 1\leq i,j\leq \ell\}$,  which still generates $K_{\beta,D}$, can be written in terms of affine transformations on $\R^2$ as 
\[\begin{pmatrix}
    \frac{1}{b} & 0 \\
    0 & \frac{1}{b} 
  \end{pmatrix}\begin{pmatrix}
                 x  \\
                 y
               \end{pmatrix}+\begin{pmatrix}
                               u_i\\v_i 
                             \end{pmatrix},\quad 1\leq i\leq m,\]
where $b=|\beta|^2$, $m=\#\Phi_{\beta,D}^2$ and $u_i,v_i\in \Q$ for $1\leq i\leq m$. Meanwhile, every element of $G_{\mathcal{S}}$, when written as a point in $\Q^2$, is of the form $(\frac{b_1}{a},\frac{b_2}{a})$, where $a$ is a product of powers of the norms $N(\vartheta)$, $\vartheta\in\mathcal{S}$. Note that by Lemma \ref{lemgcd}, $\gcd(b, N(\vartheta))=1$  for all $\vartheta\in\mathcal{S}$. Moreover, observe that if $(\frac{b_1}{a},\frac{b_2}{a})\in G_{\mathcal{S}}\cap K_{\beta,D}$, then $\frac{b_1}{a}$ lies in the self-similar set $K_1$ on $\R$ generated by the IFS $\left\{b^{-1}x+u_i\right\}_{i=1}^m$,
and $\frac{b_2}{a}$ lies in the self-similar set $K_2$ on $\R$ generated by the IFS $\left\{b^{-1}x+v_i\right\}_{i=1}^m,$ each of which has Hausdorff dimension less than $1$ as $K_1,K_2$ are orthogonal projections of $K_{\beta,D}$ to the coordinates. It then follows from  \cite[Theorem 1.1]{KLW25} that there are only finite many such points, yielding that $\#(G_{\mathcal{S}}\cap K_{\beta,D})<\infty$. Below we consider the case that $\beta\in \Z[i]\setminus \Z$.

Assume that $\beta\in \Z[i]\setminus \Z$. Then by Lemma \ref{lemnotline}, $K_{\beta,D}$ is not contained in a line in $\C$. Since $\Phi_{\beta,D}$ satisfies the WSP (cf. Lemma \ref{LemWSP}) and $\dim_{\rm H}K_{\beta,D}<1$, we can apply Lemma \ref{LemComposition} to find a composition $\Psi$ of $\Phi_{\beta,D}$ with similarity dimension less than $1$. Hence  by replacing $\Phi_{\beta,D}$ by $\Psi$ (which will not affect the result), we can assume that $\Phi_{\beta,D}$ has similarity dimension $s<1$.

For \( \mathbf{n} = (h,r_1,s_1,\ldots, r_q,s_q,n_1, \ldots, n_k) \in \mathbb{N}_0^{k+2q+1}\),  let $G_{\mathcal{S}}^{\mathbf{n}}$ be the set of all Gaussian rational numbers of the form
\[\frac{\alpha}{\gamma^h\eta_1^{r_1}\xi_1^{s_1}\cdots \eta_q^{r_q}\xi_q^{s_q}\beta_1^{n_1}\cdots \beta_k^{n_k}},\] 
where $\alpha\in \mathbb{Z}[i]$ with $\gcd(\alpha, \gamma^h\eta_1^{r_1}\xi_1^{s_1}\cdots \eta_q^{r_q}\xi_q^{s_q}\beta_1^{n_1} \beta_2^{n_2} \cdots \beta_k^{n_k})=1$.
Then $G_{\mathcal{S}}= \bigcup_{\mathbf{n} \in \mathbb{N}_0^{k+2q+1}}G_{\mathcal{S}}^{\mathbf{n}}.$
Note that for each \( \mathbf{n} \in \mathbb{N}_0^{k+2q+1}\), \( G_{\mathcal{S}}^{\mathbf{n}} \) is uniformly discrete; so we have \( \#(G_{\mathcal{S}}^{\mathbf{n}} \cap K_{\beta,D})< +\infty \). Therefore, to prove \( \#(G_{\mathcal{S}}\cap K_{\beta,D}) < +\infty \), it suffices to show that there exist only finite many   \( \mathbf{n}\in \mathbb{N}_0^{k+2q+1}\) such that $G_{\mathcal{S}}^{\mathbf{n}}\cap K_{\beta,D}\neq \emptyset$.

Let  \( \mathbf{n} = (h,r_1,s_1,\ldots, r_q,s_q,n_1, \ldots, n_k) \in \mathbb{N}_0^{k+2q+1}\) such that \( G_{\mathcal{S}}^{\mathbf{n}} \cap K_{\beta, D}\neq \emptyset \). Take
\[z=\frac{\alpha}{\gamma^h\eta_1^{r_1}\xi_1^{s_1}\cdots \eta_q^{r_q}\xi_q^{s_q}\beta_1^{n_1}\cdots \beta_k^{n_k}} \in G_{\mathcal{S}}^{\mathbf{n}}  \cap K_{\beta, D},\]
where \( \alpha \in \mathbb{Z}[i] \) with $\gcd(\alpha, \gamma^h\eta_1^{r_1}\xi_1^{s_1}\cdots \eta_q^{r_q}\xi_q^{s_q}\beta_1^{n_1} \beta_2^{n_2} \cdots \beta_k^{n_k})=1$. According to Lemma~\ref{lemperiod}, \( z \) has an eventually periodic coding \( j_1 j_2 \ldots j_m (j_{m+1} j_{m+2} \ldots j_{m+n})^\infty \in \Sigma^{\mathbb{N}} \) with period 
\begin{equation*}
n \leq C_1 H(\gamma^h\eta_1^{r_1}\xi_1^{s_1}\cdots \eta_q^{r_q}\xi_q^{s_q}\beta_1^{n_1}\cdots \beta_k^{n_k})^s.
\end{equation*}
Thus by Lemmas~\ref{LemHsubmul}, \ref{LemHalphaprime} and \ref{LemIIIpower}, we have 
\begin{align}
n&\leq C_1 \left(H(\gamma^h)H(\eta_1^{r_1}\xi_1^{s_1})\cdots H(\eta_q^{r_q}\xi_q^{s_q})H(\beta_1)^{n_1}\cdots H(\beta_k)^{n_k}\right)^s\nonumber\\
&\leq 2^sC_1 \left(2^{h/2}H(\eta_1)^{e_1}\cdots H(\eta_q)^{e_q}H(\beta_1)^{n_1}\cdots H(\beta_k)^{n_k}\right)^s,\label{eqpeoridup}
\end{align}
where $e_i=\max\{r_i,s_i\}$ for $1\leq i\leq q$.

Since \( D \subset \mathbb{Q}(i) \) is a finite set, we can find a non-zero \(\Theta \in \mathbb{Z}[i] \) so that \(\Theta t \in \mathbb{Z}[i] \) for all \( t \in D\).
Note that
\begin{align*}
z&= \sum_{i=1}^m \frac{t_{j_i}}{\beta^i} + \left(1 + \frac{1}{\beta^n} + \frac{1}{\beta^{2n}} + \cdots \right) \sum_{i=m+1}^{m+n} \frac{t_{j_i}}{\beta^i} \\
&= \sum_{i=1}^m \frac{t_{j_i}}{\beta^i} + \frac{\beta^n}{\beta^n-1} \sum_{i=m+1}^{m+n} \frac{t_{j_i}}{\beta^i},
\end{align*}
which yields that 
\[
z=\frac{\alpha}{\gamma^h\eta_1^{r_1}\xi_1^{s_1}\cdots \eta_q^{r_q}\xi_q^{s_q}\beta_1^{n_1}\cdots \beta_k^{n_k}}\in \frac{\mathbb{Z}[i]}{\beta^m (\beta^n - 1)\Theta}.
\]
Since \( \gcd(\alpha, \gamma^h\eta_1^{r_1}\xi_1^{s_1}\cdots \eta_q^{r_q}\xi_q^{s_q}\beta_1^{n_1}\cdots \beta_k^{n_k})=1\) and \( \gcd(\beta, \gamma^h\eta_1^{r_1}\xi_1^{s_1}\cdots \eta_q^{r_q}\xi_q^{s_q}\beta_1^{n_1}\cdots \beta_k^{n_k})=1\), it follows that
\[
\gamma^h\eta_1^{r_1}\xi_1^{s_1}\cdots \eta_q^{r_q}\xi_q^{s_q}\beta_1^{n_1}\cdots \beta_k^{n_k}\mid (\beta^n - 1)\Theta.
\]
Write \[\gcd(\Theta, \gamma^h\eta_1^{r_1}\xi_1^{s_1}\cdots \eta_q^{r_q}\xi_q^{s_q}\beta_1^{n_1}\cdots \beta_k^{n_k}) =\gamma^{h'}\eta_1^{r_1'}\xi_1^{s_1'}\cdots \eta_q^{r_q'}\xi_q^{s_q'}\beta_1^{n_1'}\cdots \beta_k^{n_k'}\] with \((h',r_1',s_1',\ldots, r_q',s_q',n_1', \ldots, n_k') \in \mathbb{N}_0^{k+2q+1}\). Then we obtain that
\[
\Upsilon:=\gamma^{h-h'}\eta_1^{r_1-r_1'}\xi_1^{s_1-s_1'}\cdots \eta_q^{r_q-r_q'}\xi_q^{s_q-s_q'}\beta_1^{n_1-n_1'}\cdots \beta_k^{n_k-n_k'}\cdots \beta_k^{n_k - n_k'} \mid \beta^n-1.
\]
That is, $\beta^n \equiv 1 \pmod{\Upsilon}$.
So we have 
\[{\rm ord}(\beta; \Upsilon) \mid n.
\]
Then, by Lemma~\ref{lemorderlowerEs}, we see that
\begin{align}
  n &\geq {\rm ord}(\beta; \Upsilon)\nonumber\\
  &\geq C_32^{\frac{h-h'}{2}}H(\eta_1)^{e_1'}\cdots H(\eta_q)^{e_q'}H(\beta_1)^{n_1-n_1'}\cdots H(\beta_k)^{n_k-n_k'}\nonumber\\
  &\geq C_32^{\frac{h-h'}{2}}H(\eta_1)^{e_1-e_1''}\cdots H(\eta_q)^{e_q-e_q''}H(\beta_1)^{n_1-n_1'}\cdots H(\beta_k)^{n_k-n_k'}\label{eq3inq}\\
  &= C_3R^{-1}2^{\frac{h}{2}}H(\eta_1)^{e_1}\cdots H(\eta_q)^{e_q}H(\beta_1)^{n_1}\cdots H(\beta_k)^{n_k},\label{eq4inq}
\end{align}
where $e_i'=\max\{r_i-r_i',s_i-s_i'\}$, $e_{i}''=\max\{r_i',s_i'\}$ for $1\leq i\leq q$, and 
\[R=2^{\frac{h'}{2}}H(\eta_1)^{e_1''}\cdots H(\eta_q)^{e_q''}H(\beta_1)^{n_1'}\cdots H(\beta_{k})^{n_k'}.\]
Note that in \eqref{eq3inq} we have used the fact that for  $1\leq i\leq q$, $$\max\{r_i-r_i',s_i-s_i'\}\geq \max\{r_i,s_i\}-\max\{r_i'-s_i'\}.$$ 
Now, combining \eqref{eqpeoridup} and \eqref{eq4inq} yields that 
\begin{equation}\label{FinalInq}
C_3R^{-1}A_{\mathbf{n}}\leq  2^sC_1A_{\mathbf{n}}^s,
\end{equation}
where $A_{\mathbf{n}}=2^{\frac{h}{2}}H(\eta_1)^{e_1}\cdots H(\eta_q)^{e_q}H(\beta_1)^{n_1}\cdots H(\beta_k)^{n_k}$. Since $s<1$ and each factor in $A_{\mathbf{n}}$ is at least $2$, the inequality \eqref{FinalInq} is satisfied by only finitely many $\mathbf{n}\in \N_0^{k+2q+1}$.
Hence $G_{\mathcal{S}}^{\mathbf{n}}\cap K_{\beta,D}\neq \emptyset$ only for finitely many $\mathbf{n}\in \N_0^{k+2q+1}$. This completes the proof of the theorem.
\end{proof}

\begin{rem}
We note that the first part of the above proof can be extended to the diagonal self-affine sets in $\R^2$. More precisely, let $F\subset\R^2$ be the self-affine set generated by the IFS
\[\begin{pmatrix}
    \frac{1}{a} & 0 \\
    0 & \frac{1}{b} 
  \end{pmatrix}
  \begin{pmatrix}
  x  \\y
  \end{pmatrix}+\begin{pmatrix}
  u_i\\v_i 
  \end{pmatrix},\quad 1\leq i\leq m,\]
where $a,b\in\Z$ with $|a|, |b|>1$, $u_i,v_i\in\Q$, $1\leq i\leq m$. We call $F$ a  diagonal self-affine set. Then a slight modification of the first part of the proof of Theorem~\ref{thmfinite} yields the following statement: Let $\mathcal{S}\subset \Z[i]$ be a finite set of non-associate prime elements with $\gcd(a,\vartheta)=\gcd(b,\vartheta)=1$ for all $\vartheta\in\mathcal{S}$. Suppose that the orthogonal projections of $F$ to the coordinates both have Hausdorff dimension less than $1$. Then we have $\#(F\cap G_{\mathcal{S}})<\infty$. 
\end{rem}

{\noindent \bf  Acknowledgements}. The author would like to thank De-Jun Feng, Bing Li, Jiangtao Li, Ruofan Li, Cai-Yun Ma and Zhao Shen for their helpful comments and suggestions.  This project is supported by the Natural Science Foundation of China (Grant No. 12301110) and Natural Science Foundation of Hunan Province (Grant No. 2023JJ40700). 


\end{document}